\documentclass{amsart}

\usepackage{amssymb}

\usepackage[pdftex]{hyperref} 
\usepackage[capitalize]{cleveref}

\setlength{\arraycolsep}{3.5pt}

\newcommand{\pref}[1]{(\ref{#1})}
\newcommand{\fullcref}[2]{\cref{#1}\pref{#1-#2}}
\newcommand{\csee}[1]{(see \cref{#1})}

\newcommand{\iso}{\cong}
\newcommand{\normal}{\triangleleft}

\newcommand{\integer}{\mathbb{Z}}

\renewcommand{\pmod}[1]{\ (\mathop{\mathrm{mod}}{#1})}

\DeclareMathOperator{\Cay}{Cay}
\DeclareMathOperator{\GL}{GL}

\DeclareMathOperator{\GF}{GF}
\DeclareMathOperator{\Aut}{Aut}

\newcommand{\qt}[1]{\overline{#1}}
\let\qtit=\qt
\newcommand{\bya}[1]{&\stackrel{\textstyle a}{\longrightarrow}&\qtit{#1}}
\newcommand{\byai}[1]{&\stackrel{\textstyle a^{-1}}{\longrightarrow}&\qtit{#1}}
\newcommand{\byb}[1]{&\stackrel{\textstyle b}{\longrightarrow}&\qtit{#1}}
\newcommand{\bybi}[1]{&\stackrel{\textstyle b^{-1}}{\longrightarrow}&\qtit{#1}}

\newcommand{\ul}[1]{\underline{#1}}

\makeatletter 
\swapnumbers
\def\swappedhead#1#2#3{%
  {\normalfont(\thmnumber{#2})}
  \thmname{\@ifnotempty{#2}{~}#1}%
  \thmnote{ {\the\thm@notefont#3}}} 
\def\thmhead@plain#1#2#3{%
  \thmname{#1}\thmnumber{\@ifnotempty{#1}{ }\@upn{#2}}%
  \thmnote{ {\the\thm@notefont#3}}} 
\let\thmhead\thmhead@plain
\makeatother

\crefformat{case}{Case~#2#1#3}

 \newcounter{case}
 \newenvironment{case}[1][\unskip]{\refstepcounter{case}\bf
 \medskip \noindent Case \thecase\ #1. \it}{\unskip\upshape}
 \renewcommand{\thecase}{\arabic{case}}
\crefformat{case}{Case~#2#1#3}

 \newcounter{subcase}
 
\numberwithin{subcase}{case}
\crefformat{subcase}{Subcase~#2#1#3}

 \newcounter{subsubcase}
 
\numberwithin{subsubcase}{subcase}
\crefformat{subsubcase}{Subsubcase~#2#1#3}

 \newcounter{subsubsubcase}
 
\numberwithin{subsubsubcase}{subsubcase}
\crefformat{subsubsubcase}{Subsubsubcase~#2#1#3}

\numberwithin{equation}{section}

\theoremstyle{plain}
\newtheorem{thm}[equation]{Theorem}
\newtheorem*{thmstar}{Theorem}
\newtheorem{prop}[equation]{Proposition}
\newtheorem{cor}[equation]{Corollary}

\newtheorem{assump}[equation]{Assumption}
\newtheorem{lem}[equation]{Lemma}
\newtheorem{FGL}[equation]{Lemma}

\crefformat{thm}{Theorem~#2#1#3}
\crefformat{prop}{Proposition~#2#1#3}
\crefformat{cor}{Corollary~#2#1#3}
\crefformat{goal}{Goal~#2#1#3}
\crefformat{lem}{Lemma~#2#1#3}
\crefformat{FGL}{the Factor Group Lemma~#2(#1)#3}

\theoremstyle{definition}
\newtheorem*{notation}{Notation}

\newtheorem{note}[equation]{Note}
\newtheorem{defn}[equation]{Definition}

\theoremstyle{remark}
\newtheorem{rem}[equation]{Remark}
\newtheorem*{ack}{Acknowledgments}

\crefformat{rem}{Remark~#2#1#3}

\makeatletter
\renewcommand{\tocsection}[3]{%
  \indentlabel{\@ifnotempty{#2}{\ignorespaces#1 \S#2.\ }}#3}
\renewcommand{\tocsubsection}[3]{%
  \indentlabel{\@ifnotempty{#2}{\ignorespaces#1 \S#2.\ }}#3}
 
\def\@tocpagenum#1{}
\def\l@section{\@tocline{1}{0pt}{3em}{5pc}{}}
\def\l@subsection{\@tocline{2}{0pt}{4.5em}{5pc}{}} 
\renewcommand{\@bibtitlestyle}{%
\par\bigbreak \centerline{\normalfont\scshape References}
}
\makeatother

\begin{document}

\title{Cayley graphs of order $27p$ are hamiltonian}

\author{Ebrahim Ghaderpour}
\address{Department of Mathematics and Computer Science,
University of Lethbridge, Lethbridge, Alberta, T1K~3M4, Canada}

\author{Dave Witte Morris}
\address{Department of Mathematics and Computer Science,
University of Lethbridge, Lethbridge, Alberta, T1K~3M4, Canada}

\begin{abstract}
Suppose $G$ is a finite group, such that $|G| = 27p$, where $p$~is prime. We show that if $S$ is any generating set of~$G$, then there is a hamiltonian cycle in the corresponding Cayley graph $\Cay(G;S)$.
\end{abstract}

\maketitle

\begin{thmstar} \label{27p}
If\/ $|G| = 27p $, where $p$~is prime, then every connected Cayley graph on~$G$ has a hamiltonian cycle.
\end{thmstar}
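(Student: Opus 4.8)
The plan is to reduce the problem to a small number of structural situations and then apply the standard lifting machinery. Since $|G| = 3^3p$ has only two prime divisors, Burnside's $p^aq^b$ theorem tells us that $G$ is solvable, so $G$ has plenty of normal subgroups to exploit. The central engine will be the Factor Group Lemma: given a normal subgroup $N \normal G$, a hamiltonian cycle in the quotient $\Cay(G/N;\overline{S})$ whose sequence of generators has ``voltage'' generating~$N$ lifts to a hamiltonian cycle in $\Cay(G;S)$. I would combine this with the known fact that every connected Cayley graph on a $p$-group is hamiltonian, which disposes of the case $p = 3$ immediately, since then $|G| = 81$ is a $3$-group.

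So assume $p \neq 3$. First I would pin down the Sylow structure. Writing $Q$ for a Sylow $3$-subgroup ($|Q| = 27$) and $P$ for a Sylow $p$-subgroup ($|P| = p$), the Sylow counting conditions $n_3 \equiv 1 \pmod 3$, $n_3 \mid p$ and $n_p \equiv 1 \pmod p$, $n_p \mid 27$ force at least one of $P$, $Q$ to be normal: for $p \notin \{2,13\}$ one gets $n_p = 1$ outright, while the only borderline configuration, at $p = 13$, is killed by a short element count. This splits the argument into two main cases.

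If $P \normal G$, then $G/P$ is a group of order~$27$, hence a $3$-group whose Cayley graph $\Cay(G/P;\overline{S})$ is hamiltonian. Since $|P| = p$ is prime, every nonidentity element generates~$P$, so the Factor Group Lemma applies as soon as some hamiltonian cycle downstairs has nonzero voltage in $P \iso \integer_p$; were every lift to have trivial voltage, the lift would split into $p$ disjoint cycles and connectedness of $\Cay(G;S)$ would be violated, and I would use this observation, together with the freedom to reroute the cycle in the quotient, to force a generating voltage. If instead $Q \normal G$, then $G/Q$ is cyclic of order~$p$, so $[G,G] \subseteq Q$ is a $3$-group of order dividing~$27$. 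When $[G,G]$ is cyclic it has prime-power order, and the cited theorem that Cayley graphs with cyclic commutator subgroup of prime-power order are hamiltonian finishes the job; the remaining possibility is that $[G,G]$ is a non-cyclic $3$-group, namely $[G,G] \iso \integer_3 \times \integer_3$ or a non-cyclic group of order~$27$, which must be treated by hand.

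I expect the non-cyclic commutator cases to be the main obstacle, together with the bookkeeping needed to verify the voltage hypothesis of the Factor Group Lemma across the many subcases arising from different generating sets. The hardest situation is likely a two-generator~$S$ together with $[G,G] \iso \integer_3 \times \integer_3$ (or non-cyclic of order~$27$): here there is little room to maneuver, and I would fall back on constructing explicit hamiltonian paths in a suitable quotient $\Cay(G/N;\overline{S})$ with prescribed endpoints, chosen so that the accumulated voltage runs through a generator of~$N$. Organizing these constructions uniformly, rather than treating each isomorphism type of~$G$ ad hoc, is the part of the proof I would expect to require the most care.
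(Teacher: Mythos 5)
Your skeleton matches the paper's at the coarsest level: dispose of small $p$, split on whether the Sylow $p$-subgroup is normal, and drive everything through \cref{FGL} and \cref{KeatingWitte}. But there is a genuine gap at the heart of the case $P \normal G$, which is where essentially all of the paper's work lives. When the Sylow $3$-subgroup $Q$ is nonabelian and acts nontrivially on~$P$, one has $G' = Q' \times P \iso \integer_{3p}$, which is cyclic but \emph{not} of prime-power order, so \cref{KeatingWitte} does not apply; one genuinely must exhibit a hamiltonian cycle in $\Cay(G/P;S)$ whose voltage is nontrivial in~$P$. Your justification for why such a cycle exists --- ``were every lift to have trivial voltage, the lift would split into $p$ disjoint cycles and connectedness of $\Cay(G;S)$ would be violated'' --- is fallacious: the lift of one hamiltonian cycle is only a spanning subgraph of $\Cay(G;S)$, and its disconnectedness says nothing about the connectedness of the full Cayley graph, which has many other edges. (If that implication were valid, the Factor Group Lemma would trivialize much of this research program.) ``Freedom to reroute'' is likewise not an argument. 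The paper closes this gap by hand: it classifies the minimal generating sets up to automorphism (\cref{S=2choices}, then \cref{exp3S} and \cref{GensExp9} for the two nonabelian groups of order~$27$), writes down explicit hamiltonian cycles in $G/P$, and computes each voltage as an explicit polynomial in a primitive cube root~$r$ of unity mod~$p$, using $r^2+r+1\equiv 0 \pmod{p}$ to verify the exponent can be made nonzero. None of that is present in, or replaceable by the soft argument of, your proposal.

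A secondary point: the non-normal-$P$ configuration does occur --- at $p=13$ the group $\integer_{13}\ltimes(\integer_3)^3$ with irreducible action has $27$ Sylow $13$-subgroups --- and your element count correctly forces its Sylow $3$-subgroup to be normal, but then $G'=(\integer_3)^3$ is non-cyclic and you only remark that this ``must be treated by hand.'' The paper treats it by first pinning down $p=13$ and the seven possible generating sets up to equivalence (\cref{PnotnormalIs13}), and then producing, for each one, an explicit hamiltonian cycle in the quotient multigraph $P\backslash\Cay(G;S)$ that uses a double edge, so that \cref{MultiDouble} applies. Here too there is no soft substitute on offer, so the proposal identifies the right landmarks but leaves the actual proof unwritten precisely where it is hard.
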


Combining this with results in \cite{CurranMorris2-16p,GhaderpourMorris-30p,M2Slovenian-LowOrder} establishes that:
	\begin{align}
	\begin{matrix}
	\text{\it Every Cayley graph on~$G$ has a hamiltonian cycle}
	\\ \text{if\/ $|G| = kp$, 
	where $p$~is prime, $1 \le k < 32$, and $k \neq 24$.}
	\end{matrix}
	\end{align}

The remainder of the paper provides a proof of the theorem. Here is an outline: 

\tableofcontents

\section{Preliminaries: known results on hamiltonian cycles in Cayley graphs}

For convenience, we record some known results that provide hamiltonian cycles in various Cayley graphs, after fixing some notation.

\begin{notation}[{}{\cite[\S1.1 and \S5.1]{Gorenstein-FinGrps}}]
For any group~$G$, we use:
	\begin{enumerate}
	\item $G'$ to denote the \emph{commutator subgroup} $[G,G]$ of~$G$,
	\item $Z(G)$ to denote the \emph{center} of~$G$,
	and
	\item $\Phi(G)$ to denote the \emph{Frattini subgroup} of~$G$.
	\end{enumerate}
For $a,b \in G$, we use $a^b$ to denote the \emph{conjugate} $b^{-1} a b$.
\end{notation}

\begin{notation}
If $(s_1,s_2,\ldots,s_n)$ is any sequence, we use $(s_1,s_2,\ldots,s_n)\#$ to denote the sequence $(s_1,s_2,\ldots,s_{n-1})$ that is obtained by deleting the last term.
\end{notation}

\begin{thm}[(Maru\v si\v c, Durnberger, Keating-Witte \cite{KeatingWitte})] \label{KeatingWitte}
 If $G'$ is a cyclic group of prime-power order, then
every connected Cayley graph on~$G$ has a hamiltonian cycle.
 \end{thm}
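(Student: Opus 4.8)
The plan is to induct on the order of the commutator subgroup. Write $|G'| = p^k$ with $p$~prime, fix a generating set~$S$ of~$G$, and recall two standard tools: the Factor Group Lemma, which lifts a hamiltonian cycle from a quotient $G/N$ to $\Cay(G;S)$ provided the product of the generators read along the quotient cycle (its \emph{voltage}) generates~$N$; and the classical fact that every connected Cayley graph on a nontrivial abelian group has a hamiltonian cycle. The latter is the base case $k = 0$, in which $G = G/G'$ is abelian.

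For the inductive step, suppose $k \ge 1$. Since the cyclic group $G'$ has prime-power order, it has a unique subgroup $H$ of order~$p$; as $H$ is characteristic in~$G'$ and $G' \normal G$, we get $H \normal G$. The quotient $\qt{G} = G/H$ satisfies $(\qt{G})' = G'/H$, which is cyclic of order $p^{k-1}$, so by the inductive hypothesis $\Cay(\qt{G}; \qt{S})$ has a hamiltonian cycle~$C$, where $\qt{S}$ is the image of~$S$. Recording $C$ as a sequence $(s_1, \ldots, s_n)$ of elements of $S \cup S^{-1}$, its lift to $\Cay(G;S)$ is a hamiltonian path whose voltage $v = s_1 s_2 \cdots s_n$ lies in~$H$. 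If $v \ne e$, then (since $|H| = p$) $v$ generates~$H$, and the Factor Group Lemma completes the induction.

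The main obstacle is the case $v = e$, where the lift breaks into $|H| = p$ disjoint cycles instead of a single one. To escape it I would exploit the freedom in choosing~$C$. If $C$ contains two consecutive edges labeled $a$ then~$b$ with $\qt{a}\,\qt{b} = \qt{b}\,\qt{a}$ in~$\qt{G}$, then reading these edges in the opposite order, $b$ then~$a$, produces another hamiltonian cycle~$C'$ of $\Cay(\qt{G}; \qt{S})$, and the voltage of~$C'$ differs from that of~$C$ by the commutator $[a,b]$, which lies in~$H$ precisely because $[\qt{a}, \qt{b}] = e$. The heart of the matter is to guarantee that some such swap changes the voltage by a \emph{generator} of~$H$: then $C$ and~$C'$ cannot both have trivial voltage, and the Factor Group Lemma applies to whichever one does not.

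This is exactly where the hypotheses are essential. Because $G'$ is cyclic of prime-power order, $H$ is its unique minimal subgroup, so \emph{any} nontrivial voltage change lands in~$H$ and, having prime order, is automatically a generator; moreover the prime-power tower $G' > H > \cdots$ lets one propagate commutators through the successive quotients in a controlled way. Verifying that a voltage-changing configuration is always available—in every arrangement of the quotient cycle, and after the elementary rerouting is no longer the simple transposition described above—is the substantive content of the theorem, and is where the original arguments of Durnberger (for $|G'| = p$) and Keating--Witte (for general prime powers) do their real work.
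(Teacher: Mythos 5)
This is \cref{KeatingWitte}, which the paper does not prove at all: it is quoted as a known result with a citation to Keating--Witte (and, implicitly, to Maru\v si\v c and Durnberger for the case $|G'| = p$), so there is no in-paper argument to compare yours against. Judged on its own terms, your write-up is an outline of the standard strategy rather than a proof. You say so yourself: the ``substantive content'' --- showing that a hamiltonian cycle in $\Cay(G/H;S)$ with nontrivial voltage can always be produced --- is exactly what you defer to the original papers. Since that is the entire difficulty of the theorem, the proposal has a genuine gap; the induction-plus-Factor-Group-Lemma frame around it is routine.

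There is also a concrete error in the one mechanism you do describe. Transposing two consecutive edges labelled $a$ and $b$ (with $\qt{a}\,\qt{b}=\qt{b}\,\qt{a}$) preserves the endpoint of that two-step segment but changes the intermediate vertex from $v\qt{a}$ to $v\qt{b}$. Since $C$ is a hamiltonian cycle, $v\qt{b}$ is already visited somewhere else, so the modified closed walk visits $v\qt{b}$ twice and misses $v\qt{a}$: it is not a hamiltonian cycle. The actual arguments of Durnberger and Keating--Witte do not use local transpositions; they build the cycle from a carefully chosen hamiltonian cycle in the abelian quotient $G/G'$ and reroute along entire cosets of a subgroup chain inside $G'$, which is where the hypothesis that $G'$ is cyclic of prime-power order is really used. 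Your closing paragraph correctly identifies that $H$ being the unique minimal subgroup makes any nontrivial voltage automatically a generator, but without a valid construction that forces a nontrivial voltage, the induction does not close.
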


\begin{lem}[{}{\cite[Lem.~2.27]{M2Slovenian-LowOrder}}]
\label{NormalEasy}
 Let $S$ generate the finite group~$G$, and let $s \in S$. If
 \begin{itemize}
 \item $\langle s \rangle \normal G$,
 \item $\Cay \bigl( G/\langle s \rangle ; S \bigr)$ has a
hamiltonian cycle,
 and
 \item either
 \begin{enumerate}
 \item \label{NormalEasy-Z} 
 $s \in Z(G)$,
 or
 \item \label{NormalEasy-p}
 $|s|$ is prime,
 \end{enumerate}
 \end{itemize}
 then $\Cay(G;S)$ has a hamiltonian cycle.
 \end{lem}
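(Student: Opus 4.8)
The plan is to reduce to the Factor Group Lemma by computing a \emph{voltage}. Write $N=\langle s\rangle$, and let $\bar H=(\bar s_1,\dots,\bar s_m)$ be the given hamiltonian cycle in $\Cay(G/N;S)$, so $m=|G|/|s|$. Lifting $\bar H$ to a walk in $\Cay(G;S)$ that starts at the identity and follows $s_1,\dots,s_m$ ends at the element $\pi=s_1\cdots s_m\in N$. If $\pi$ generates~$N$, then the Factor Group Lemma lifts $\bar H$ directly to a hamiltonian cycle. So the whole difficulty is the case in which $\pi$ is \emph{not} a generator of~$N$. In that case the lift of $\bar H$ is not one long cycle but a disjoint union of $d$ shorter cycles, which I will call \emph{sheets}, where $d$ is the index of $\langle\pi\rangle$ in~$N$; each sheet projects bijectively onto $\bar H$, so it meets every coset of~$N$ exactly once. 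The goal becomes to splice these $d$ sheets into a single hamiltonian cycle using edges labelled by~$s$, which run \emph{inside} the cosets of~$N$ since $s\in N$. Hypotheses \pref{NormalEasy-Z} and \pref{NormalEasy-p} are exactly what make this splicing possible.

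Consider first the central case \pref{NormalEasy-Z}, where $N\le Z(G)$. Here I would show that the $s$-edges rotate the sheets uniformly: if a vertex $v$ lies on a given sheet, then $vs$ lies on a sheet a fixed ``distance'' away, and, crucially, centrality gives $(vs)t=(vt)s$ for every generator $t$, so this same shift persists after moving one step along~$\bar H$. This lets me perform a clean two-sheet merge: at one coset delete the two $\bar H$-edges leaving $v$ and $vs$, and add the two $s$-edges, from $v$ to $vs$ and from $\sigma(v)$ to $\sigma(v)s$, where $\sigma$ denotes the successor along the sheet; since $\sigma(vs)=\sigma(v)s$, this is a valid swap that joins the two sheets into one. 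Repeating merges all $d$ sheets into a single hamiltonian cycle. This argument never uses primality, matching the fact that \pref{NormalEasy-Z} places no restriction on $|s|$.

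For the prime case \pref{NormalEasy-p}, $|N|=|s|$ is prime, so the only non-generator of the cyclic group~$N$ is the identity; hence the bad case forces $\pi=e$, and the lift splits into exactly $p=|s|$ sheets, each an $m$-cycle. Because $s$ generates~$N$, right-multiplication by~$s$ permutes each coset of~$N$ as a single $p$-cycle, so within \emph{every} coset the $s$-edges link the $p$ sheets together. This abundance of $s$-edges is what allows the $p$ sheets to be woven into one hamiltonian cycle.

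The step I expect to be the main obstacle is exactly this last splicing when $s$ is \emph{not} central. Writing a vertex as $s^e x_i$, with $e\in\integer/p$ indexing the sheet and $i$ indexing the coset in $\bar H$-order, an $s$-edge sends $s^e x_i$ to $s^{e+b_i}x_i$, where the unit $b_i$ is determined by conjugating $s$ by~$x_i$ and therefore \emph{varies from coset to coset}. This twisting destroys the simultaneous two-sheet swap that worked in the central case, so the merge must instead thread $s$-edges through several cosets at once while tracking how the $b_i$ accumulate; the real content is to verify that such a path, for instance a boustrophedon across the $(\integer/p)\times(\integer/m)$ grid of sheets, actually closes up into a single cycle visiting every sheet. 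The central case is comparatively painless precisely because there the twist is trivial.
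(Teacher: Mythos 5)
The paper does not prove this lemma; it is quoted from \cite[Lem.~2.27]{M2Slovenian-LowOrder}, so your attempt has to stand on its own. Your reduction to the Factor Group Lemma and your treatment of case \pref{NormalEasy-Z} are essentially right: when $s$ is central, the two-sheet swap (delete the two sheet-edges leaving $v$ and $vs$, insert the $s$-edges $v\to vs$ and $\sigma(v)\to\sigma(v)s$) is valid precisely because $\sigma(vs)=\sigma(v)s$, and iterating it merges all the sheets. (Two small repairs are needed there: a sheet projects bijectively onto $\bar H$ only when $\pi=e$ --- in general it wraps around $\bar H$ exactly $|\pi|$ times and meets each coset in a coset of $\langle\pi\rangle$ --- and successive swaps must be placed so that no vertex loses an edge twice, e.g.\ in distinct cosets.)

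Case \pref{NormalEasy-p} is where the proof is genuinely missing, and you say so yourself: the splicing of the $p$ sheets when $s$ is not central is announced as ``the main obstacle'' but never carried out. The gap is not just an omitted verification; the mechanism you sketch does not work as stated. Writing $x_is x_i^{-1}=s^{c_i}$, a local $2$-swap between adjacent cosets $i$ and $i+1$ must delete the sheet-edges at heights $j$ and $j'$ in both cosets and replace them by an $s$-edge in each coset, which forces $j'-j\equiv\pm c_i$ \emph{and} $j'-j\equiv\pm c_{i+1}\pmod p$; this is impossible unless $t_{i+1}$ acts on $\langle s\rangle$ by $\pm1$, which need not happen at any $i$. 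So the ``boustrophedon across the $(\integer/p)\times(\integer/m)$ grid'' cannot be threaded by the edges actually available: entering column $i$ at height $r$ and sweeping all heights but one forces the exit height into $\{\pm c_i\}$ up to translation, and the next column demands $\{\pm c_{i+1}\}$. Salvaging the argument requires either multi-sheet reroutings (e.g.\ deleting the four sheet-edges at heights $\{0,c_i,c_{i+1},c_i+c_{i+1}\}$ and rematching, which merges four sheets at once but then needs its own iteration scheme), or an entirely different idea, and none of this is in the proposal. As it stands, case \pref{NormalEasy-p} is unproved.
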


\begin{lem}[{}{\cite[Lem.~2.7]{CurranMorris2-16p}}] \label{CyclicNormal2p} 
 Let $S$ generate the finite group~$G$, and let $s \in S$. If
 \begin{itemize}
 \item $\langle s \rangle \normal G$,
 \item $|s|$ is a divisor of $pq$, where $p$ and~$q$ are distinct primes,
 \item $s^p \in Z(G)$,
 \item $|G/\langle s \rangle|$ is divisible by~$q$,
 and
 \item $\Cay \bigl( G/\langle s \rangle ; S \bigr)$ has a hamiltonian cycle,
 \end{itemize}
 then there is a hamiltonian cycle in $\Cay(G;S)$.
 \end{lem}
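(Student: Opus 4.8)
The plan is to lift the given hamiltonian cycle of $\Cay(G/\langle s\rangle;S)$ up to $\Cay(G;S)$ one prime at a time, using the Factor Group Lemma as the basic lifting tool. Write $N=\langle s\rangle$. First I would dispose of the small values of $|s|$: if $|s|=1$ there is nothing to prove; if $|s|=p$, then $|s|$ is prime and \fullcref{NormalEasy}{p} applies; and if $|s|=q$, then $s^p$ has order $q=|s|$, so $\langle s^p\rangle=\langle s\rangle\ni s$, whence $s\in Z(G)$ and \fullcref{NormalEasy}{Z} applies. So I may assume $|s|=pq$, which makes $Z:=\langle s^p\rangle$ a central subgroup of order exactly~$q$ with $N/Z\cong\integer_p$.

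The first lifting step is routine. In $G/Z$, the image $\qt s$ of~$s$ generates the normal subgroup $N/Z$, which has prime order~$p$, and the next quotient $(G/Z)/(N/Z)$ is exactly $G/N$, which carries the given hamiltonian cycle. Hence \fullcref{NormalEasy}{p} produces a hamiltonian cycle in $\Cay(G/Z;S)$. Note that \cref{NormalEasy} applies here precisely because $N/Z$ has prime order and is generated by the image of the element $s\in S$.

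The second lifting step, from $G/Z$ up to~$G$ through the central subgroup $Z\cong\integer_q$, is where the real work lies, and I expect it to be the main obstacle. The natural tool is again the Factor Group Lemma: if the voltage (the product of the edge-labels, read in~$N$) of the cycle just constructed in $\Cay(G/Z;S)$ generates~$Z$, then that lemma immediately produces a hamiltonian cycle in $\Cay(G;S)$. The trouble is that this voltage may be trivial, and \cref{NormalEasy} is now useless because the generator~$s^p$ of~$Z$ does not lie in~$S$; this is exactly the gap that the present lemma is meant to fill. This is the point at which the hypothesis $q\mid|G/\langle s\rangle|$ must enter: when the voltage is trivial, the lift breaks into $q$ vertex-disjoint cycles that are translates of one another by the central element~$s^p$, together covering~$G$.

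To finish, I would either (a) modify the cycle in $\Cay(G/Z;S)$ by a rerouting that changes its $Z$-voltage by a generator of~$Z$, exploiting that $s\in S$ with $s^p$ central gives a controlled way to insert a central shift; or (b) regard the $q$ translates as the fibres of a $\integer_q$-cover of a single cycle and splice them together cyclically, with the divisibility $q\mid|G/N|$ supplying the repeated edges of a common generator needed to perform the splice. Showing that one of these reroutings always succeeds under the stated hypotheses is the crux; by contrast, the two reductions above are straightforward.
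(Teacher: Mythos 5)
This lemma is imported into the paper from \cite[Lem.~2.7]{CurranMorris2-16p} and is not proved here, so there is no in-paper argument to compare against; judged on its own terms, your proposal has a genuine gap. The preliminary reductions are correct: the cases $|s|\in\{1,p,q\}$ follow from \cref{NormalEasy} exactly as you say (for $|s|=q$ one indeed gets $s\in\langle s^p\rangle\le Z(G)$, so \fullcref{NormalEasy}{Z} applies), and \fullcref{NormalEasy}{p} does lift the given cycle from $\Cay\bigl(G/\langle s\rangle;S\bigr)$ to $\Cay\bigl(G/\langle s^p\rangle;S\bigr)$.

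The problem is that the entire content of the lemma is the step you leave open: lifting through the central subgroup $Z=\langle s^p\rangle\cong\integer_q$. Your alternatives (a) and (b) are restatements of what must be proved, not arguments --- ``find a rerouting that shifts the voltage by a generator of $Z$'' is precisely the assertion at issue, and nothing in your write-up shows such a rerouting exists. Moreover, your own setup makes it hard to even begin: you invoke \cref{NormalEasy} as a black box, so you have no control over the shape, the edge-multiset, or the $Z$-voltage of the hamiltonian cycle it returns in $\Cay(G/Z;S)$, hence no handle with which to reroute it or to locate the repeated edges needed for a splice. A correct proof must construct the intermediate cycle explicitly and track its voltage in the full cyclic group $N=\langle s\rangle\cong\integer_{pq}$, producing a family of candidate cycles (obtained by varying the number of $s$-detours inserted into the given cycle in $G/N$) at least one of whose voltages generates the relevant subgroup, so that \cref{FGL} applies; this is where the hypotheses $s\in S$, $s^p\in Z(G)$, and $q\mid|G/\langle s\rangle|$ are actually consumed, the last guaranteeing enough room to adjust the voltage modulo~$q$. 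As written, the crux is identified but not proved.
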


The following results are well known (and easy to prove):

\begin{FGL}[{(``Factor Group Lemma'')}] \label{FGL}
 Suppose
 \begin{itemize}
 \item $S$ is a generating set of~$G$,
 \item $N$ is a cyclic, normal subgroup of~$G$,
 \item $(s_1 N,\ldots,s_n N)$ is a hamiltonian cycle in $\Cay(G/N;S)$,
 and
 \item the product $s_1 s_2 \cdots s_n$ generates~$N$.
 \end{itemize}
 Then $(s_1,\ldots,s_n)^{|N|}$ is a hamiltonian cycle in $\Cay(G;S)$.
 \end{FGL}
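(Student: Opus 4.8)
The plan is to verify directly that following the sequence $(s_1,\ldots,s_n)^{|N|}$ from the identity traces a closed walk in $\Cay(G;S)$ that visits every element of~$G$ exactly once. Write $c = s_1 s_2 \cdots s_n$, and for $0 \le i \le n$ set $v_i = s_1 s_2 \cdots s_i$, so that $v_0 = e$ and $v_n = c$. Since $c$ generates the cyclic group~$N$, we have $|c| = |N|$, so the powers $c^0, c^1, \ldots, c^{|N|-1}$ are distinct and $c^{|N|} = e$.

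First I would record the shape of the walk. Starting at~$e$, one pass through $(s_1,\ldots,s_n)$ visits $v_0, v_1, \ldots, v_{n-1}$ and arrives at $v_n = c$; since left translation by~$c$ carries the $k$-th pass to the $(k{+}1)$-th, the $(k{+}1)$-th pass visits $c^k v_0, c^k v_1, \ldots, c^k v_{n-1}$ and ends at $c^k v_n = c^{k+1}$. Consecutive vertices are joined by an edge of $\Cay(G;S)$: within a pass the step from $c^k v_i$ to $c^k v_{i+1}$ is the generator $s_{i+1} \in S$, and the wrap-around step applies~$s_n$ to send $c^k v_{n-1}$ to $c^{k+1} = c^{k+1} v_0$. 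After all $|N|$ passes the walk returns to $c^{|N|} = e$, so it is closed, and it has exactly $|N| \cdot n = |N| \cdot |G/N| = |G|$ terms.

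The crux is to show that the $|G|$ vertices $\{\, c^k v_i : 0 \le k < |N|,\ 0 \le i < n \,\}$ are pairwise distinct; since the count already equals $|G|$, this immediately gives that every vertex is visited exactly once. Suppose $c^k v_i = c^{k'} v_{i'}$. Projecting to $G/N$ and using $c \in N$ gives $\qt{v_i} = \qt{v_{i'}}$. But $\qt{v_0}, \ldots, \qt{v_{n-1}}$ are precisely the $n = |G/N|$ vertices of the hamiltonian cycle $(s_1 N, \ldots, s_n N)$ in $\Cay(G/N;S)$, hence are distinct, forcing $i = i'$. Cancelling~$v_i$ then yields $c^k = c^{k'}$, and since $0 \le k, k' < |N| = |c|$ this forces $k = k'$.

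I expect the only real subtlety to be this distinctness argument, in which normality of~$N$ is what makes the projection to $\Cay(G/N;S)$ meaningful, and in which the hypothesis that $s_1 s_2 \cdots s_n$ generates~$N$ is exactly what is needed to separate the different passes through the distinct powers of~$c$. Everything else — checking that consecutive terms are genuine edges and that the walk closes up — is routine bookkeeping.
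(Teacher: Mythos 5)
Your proof is correct and complete: the counting of the $|N|\cdot n = |G|$ vertices $c^k v_i$, the projection to $G/N$ to force $i=i'$, and the use of $|c|=|N|$ to force $k=k'$ constitute exactly the standard argument. The paper itself omits the proof (it labels the lemma ``well known and easy to prove''), so there is nothing to compare against, but your write-up is precisely the argument that is being alluded to, and it correctly identifies where each hypothesis (normality of~$N$, and $s_1\cdots s_n$ generating~$N$) is used.
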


\begin{cor} \label{DoubleEdge}
  Suppose
 \begin{itemize}
 \item $S$ is a generating set of~$G$,
 \item $N$ is a normal subgroup of~$G$, such that $|N|$ is prime,
 \item $s \equiv t \pmod{N}$ for some $s,t \in S \cup S^{-1}$ with $s \neq t$,
 and
 \item there is a hamiltonian cycle in $\Cay(G/N;S)$ that uses at least one edge labelled~$s$.
 \end{itemize}
 Then there is a hamiltonian cycle in $\Cay(G;S)$.
 \end{cor}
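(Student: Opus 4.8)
The plan is to deduce this from the Factor Group Lemma (\cref{FGL}) by modifying the given hamiltonian cycle in a single edge. Represent the given hamiltonian cycle in $\Cay(G/N;S)$ by a sequence $(s_1,\ldots,s_n)$ of elements of $S \cup S^{-1}$, so that the vertices visited are $N, s_1 N, s_1 s_2 N,\ldots$ and the closing-up condition is $s_1 s_2 \cdots s_n \in N$. Because the cycle uses an edge labelled~$s$, some index~$i$ has $s_i \in \{s,s^{-1}\}$.

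The key idea is that replacing $s_i$ by the ``parallel'' generator coming from~$t$ produces a second hamiltonian cycle of $\Cay(G/N;S)$ whose product around the cycle differs from the original by a nontrivial element of~$N$. Indeed, since $s \equiv t \pmod N$ we also have $s^{-1} \equiv t^{-1} \pmod N$, so swapping $s_i = s$ for~$t$ (or $s_i = s^{-1}$ for~$t^{-1}$) leaves every coset $s_1 \cdots s_j N$ unchanged; hence the modified sequence $(s_1,\ldots,s_{i-1}, t^{\pm1}, s_{i+1},\ldots,s_n)$ is again a hamiltonian cycle in $\Cay(G/N;S)$, and $t^{\pm1} \in S \cup S^{-1}$ by hypothesis. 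Writing $\pi$ and $\pi'$ for the products around the two cycles, a direct computation gives $\pi^{-1}\pi' = (s_{i+1}\cdots s_n)^{-1}(s^{-1}t)(s_{i+1}\cdots s_n)$ in the first case, and the analogous expression with $st^{-1}$ in the second. Since $sN = tN$ with $s \neq t$, the element $s^{-1}t$ is a nontrivial member of~$N$; as $N \normal G$, its conjugate $\pi^{-1}\pi'$ is a nontrivial element of~$N$ as well.

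Now I exploit that $|N|$ is prime: the group~$N$ is then cyclic of prime order, so every nontrivial element generates~$N$, and $\pi^{-1}\pi' \neq e$ forces $\pi \neq \pi'$. Consequently at most one of $\pi,\pi'$ can equal the identity, so at least one of them is a nontrivial element of~$N$ and therefore generates~$N$. Applying \cref{FGL} to whichever of the two hamiltonian cycles in $\Cay(G/N;S)$ has a generating product then yields a hamiltonian cycle in $\Cay(G;S)$, as required.

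The only genuine subtlety — the step I would check most carefully — is the bookkeeping in the product computation: confirming that swapping a single generator alters the closing product by exactly a conjugate of $s^{-1}t$ (equivalently $st^{-1}$), since the $s_j$ need not commute. Once that identity is secured, primality of $|N|$ does all the remaining work, since it upgrades ``nontrivial'' to ``generates~$N$'' for free and rules out both candidate products being trivial simultaneously.
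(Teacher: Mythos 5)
Your proof is correct, and it is exactly the intended derivation: the paper states this result without proof (as one of the facts that are ``well known and easy to prove''), positioning it as a corollary of \cref{FGL}, and your argument --- swap one $s$-edge for the parallel $t$-edge, observe that the two closing products differ by a conjugate of the nontrivial element $s^{-1}t$ of~$N$, so at least one of them generates the prime-order group~$N$ --- is the standard way to obtain it. Your conjugation bookkeeping $\pi^{-1}\pi' = (s_{i+1}\cdots s_n)^{-1}(s_i^{-1}t^{\pm 1})(s_{i+1}\cdots s_n)$ is right, and normality of~$N$ is correctly invoked to keep this difference inside~$N$.
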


 \begin{defn}
 If $H$ is any subgroup of~$G$, then $H \backslash {\Cay(G;S)}$ denotes the multigraph in which:
 	\begin{itemize}
	\item the vertices are the right cosets of~$H$,
	and
	\item there is an edge joining $Hg_1$ and~$Hg_2$ for each $s \in S \cup S^{-1}$, such that $g_1 s \in H g_2$.
	\end{itemize}
Thus, if there are two different elements $s_1$ and~$s_2$ of $S \cup S^{-1}$, such that $g_1 s_1$ and $g_1 s_2$ are both in $H g_2$, then the vertices $Hg_1$ and $Hg_2$ are joined by a double edge.
 \end{defn}

\begin{lem}[{}{\cite[Cor.~2.9]{M2Slovenian-LowOrder}}] \label{MultiDouble}
  Suppose
 \begin{itemize}
 \item $S$ is a generating set of~$G$,
 \item $H$ is a subgroup of~$G$, such that $|H|$ is prime,
 \item the quotient multigraph $H \backslash {\Cay(G;S)}$ has a hamiltonian cycle~$C$,
 and
 \item $C$ uses some double-edge of~$H \backslash {\Cay(G;S)}$.
 \end{itemize}
 Then there is a hamiltonian cycle in $\Cay(G;S)$.
 \end{lem}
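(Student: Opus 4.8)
The plan is to mimic the proof of the Factor Group Lemma \cref{FGL}, but to work directly with the right cosets of the (possibly non-normal) subgroup~$H$ instead of with a quotient group; this lemma is precisely the non-normal analogue of \cref{DoubleEdge}. Let $p = |H|$ and $n = |G|/p = [G:H]$. After a cyclic rotation, write the given hamiltonian cycle~$C$ of $H \backslash \Cay(G;S)$ as a sequence of edge-labels $(t_1,t_2,\ldots,t_n)$ with each $t_j \in S \cup S^{-1}$, starting and ending at the coset~$H$. Since right-multiplication by any element of $S \cup S^{-1}$ is well defined on right cosets, the cosets $H, Ht_1, Ht_1t_2, \ldots, Ht_1\cdots t_{n-1}$ are then exactly the $n$ distinct right cosets of~$H$, and the product $P := t_1 t_2 \cdots t_n$ lies in~$H$.

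First I would record the lifting principle that replaces \cref{FGL}: \emph{if the product~$P$ of the labels around~$C$ generates~$H$, then $(t_1,\ldots,t_n)^{p}$ is a hamiltonian cycle in $\Cay(G;S)$.} To see this, start the walk at~$e$ and note that on its $k$th traversal of~$C$ (for $k = 1,\ldots,p$) it visits the vertices $P^{k-1} t_1 \cdots t_j$ for $j = 0,1,\ldots,n-1$. Because $P \in H$, each such vertex lies in the coset $H t_1 \cdots t_j$; and for fixed~$j$, letting $k$ range over $1,\ldots,p$ gives $\{P^{k-1} : 1 \le k \le p\}\,t_1\cdots t_j = H t_1 \cdots t_j$, using that~$P$ generates the order-$p$ group~$H$. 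Hence each of the $n$ cosets is exhausted exactly once over the $p$ passes, so all $np = |G|$ vertices are visited exactly once; the walk closes up because $P^{p} = e$.

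It remains to arrange that the product of the labels generates~$H$, and this is where the double edge enters. Suppose~$C$ traverses a double edge at step~$i$, so the coset transition $Ht_1\cdots t_{i-1} \to Ht_1\cdots t_i$ is realized both by~$t_i$ and by some $t_i' \in S \cup S^{-1}$ with $t_i' \neq t_i$. Replacing $t_i$ by~$t_i'$ leaves the prefix product $v := t_1\cdots t_{i-1}$ unchanged, sends the step-$i$ coset to $Hv t_i' = Hv t_i$ (equal, since the edge is a double edge), and so leaves every subsequent coset transition unchanged; thus the new sequence is again a hamiltonian cycle of the quotient, with product $P' \in H$. A direct computation gives $P' P^{-1} = v\, t_i'\, t_i^{-1}\, v^{-1} \neq e$, since $t_i' \neq t_i$, so $P \neq P'$. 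As $P,P' \in H$ and $|H| = p$ is prime, at least one of them is a nonidentity element and therefore generates~$H$. Applying the lifting principle to whichever of the two sequences has product generating~$H$ yields the desired hamiltonian cycle in $\Cay(G;S)$.

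I expect the only genuinely delicate point to be the lifting principle for a non-normal~$H$: one must verify that right-multiplication keeps the walk inside the correct right cosets, so that the $p$ vertices seen in a fixed coset over the $p$ passes are distinct and exhaust that coset, rather than appealing to quotient-group structure as \cref{FGL} does. Everything else is bookkeeping.
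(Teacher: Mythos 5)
The paper does not prove this lemma at all: it is quoted as a known result from \cite[Cor.~2.9]{M2Slovenian-LowOrder}, so there is no internal proof to compare against. Your argument is correct and is exactly the standard proof behind that citation: a right-coset version of \cref{FGL} (where normality of~$H$ is not needed because only right multiplication on right cosets is used, and cyclicity is automatic since $|H|$ is prime), combined with the observation that swapping the two labels of the double edge changes the closing product $P$ by the nontrivial conjugate $v\,t_i'\,t_i^{-1}\,v^{-1}$, so at least one of the two products is a nonidentity element of the prime-order group~$H$ and hence generates it.
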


\begin{thm}[{}{\cite[Cor.~3.3]{Morris-2genNilp}}] \label{{pk}Subgrp}
  Suppose
 \begin{itemize}
 \item $S$ is a generating set of~$G$,
 \item $N$ is a normal $p$-subgroup of~$G$,
 and
 \item $s t^{-1} \in N$, for all $s,t \in S$.
 \end{itemize}
 Then $\Cay(G;S)$ has a hamiltonian cycle.
 \end{thm}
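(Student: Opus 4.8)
The plan is to induct on $|N|$, with the Factor Group Lemma doing the lifting. First I would record the structural consequences of the hypotheses. Fixing any $s_0 \in S$, the condition $st^{-1} \in N$ says $S \subseteq s_0 N$, so the image of $S$ in $G/N$ is the single element $\overline{s_0}$; since $S$ generates~$G$, this forces $G/N = \langle \overline{s_0} \rangle$ to be cyclic. In particular $G/N$ is abelian, so $G' \le N$ and hence $G'$ is a $p$-group. This already settles an important special case: if $G'$ happens to be cyclic, then it is cyclic of prime-power order and \cref{KeatingWitte} finishes immediately. The remaining difficulty is therefore concentrated in the case that the $p$-group~$N$ (and with it~$G'$) is non-cyclic.

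For the induction, the base case $N = 1$ gives $S = \{s_0\}$ and $G = \langle s_0 \rangle$ cyclic, whose Cayley graph is a single cycle. For the inductive step, I would choose a nontrivial subgroup $M \normal G$ with $M \le N$ and pass to $\bar G = G/M$: the image $\bar S$ generates $\bar G$, the subgroup $N/M$ is a normal $p$-subgroup of~$\bar G$, and $\bar s\, \bar t^{-1} = \overline{st^{-1}} \in N/M$ for all $\bar s, \bar t \in \bar S$, so the hypotheses are inherited and by induction $\Cay(\bar G; \bar S)$ has a hamiltonian cycle~$\bar C$. It then remains to lift $\bar C$ through~$M$ to $\Cay(G;S)$.

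The lifting is where the real work lies, and I would first try to arrange that $M$ is cyclic of order~$p$, so that the quoted tools apply. Using the freedom in the choice of $\bar C$ (and of which generator of $S$ lifts each edge), I would attempt to make the product of the edge-labels around the lifted cycle generate~$M$ and then invoke \cref{FGL}; when that is obstructed, the presence of two distinct elements of $S \cup S^{-1}$ that coincide modulo~$M$ produces a double edge, and \cref{DoubleEdge} (or \cref{MultiDouble}) completes the lift provided $\bar C$ can be routed across such an edge.

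The main obstacle is that a $G$-invariant subgroup of order exactly~$p$ inside~$N$ need not exist: the conjugation action of $G$ on the characteristic subgroup $\Omega_1(Z(N))$ makes it an $\mathbb{F}_p$-module for the cyclic group $G/N$, and that module can be irreducible of dimension greater than~$1$ when the relevant eigenvalues are not present in~$\mathbb{F}_p$. Hence one cannot in general peel off~$N$ one prime layer at a time, and the crux is to lift through an elementary abelian chief factor~$M$ of rank $r > 1$ all at once. I expect this to require an inner construction that threads the hamiltonian cycle block-by-block through the $|G/N|$ cosets of~$N$, exploiting the cyclic quotient $G/N = \langle \overline{s_0} \rangle$ to splice the $|N|$ translates of each coset into a single closed walk; verifying that such a routing always closes up into a hamiltonian cycle of $\Cay(G;S)$ is the step I would expect to be the hardest and most technical.
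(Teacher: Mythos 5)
First, a point of comparison: the paper does not prove this statement at all --- it is imported verbatim from \cite[Cor.~3.3]{Morris-2genNilp}, where it is deduced from that paper's main theorem on hamiltonian paths in Cayley digraphs of $2$-generated nilpotent groups. So your attempt has to stand on its own, and it does not close.

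Your structural observations are correct and are the right place to start: $S$ lies in a single coset $s_0N$, so $G/N$ is cyclic, hence $G'\le N$ is a $p$-group, and \cref{KeatingWitte} disposes of the case where $G'$ is cyclic. You also correctly diagnose the essential obstruction: a cyclic quotient $G/N$ can act irreducibly on an elementary abelian section of~$N$ of rank greater than one, so there need be no normal subgroup of order~$p$ available to peel off. This is not a hypothetical worry --- it is exactly the situation in \S3 of this paper, where $G=\integer_{13}\ltimes(\integer_3)^3$ with $w$ acting irreducibly on $N=(\integer_3)^3$ and $S=\{w^i,w^iv\}\subset w^iN$, so $N$ has no proper nontrivial $G$-invariant subgroup; that is precisely the configuration to which the paper applies the present theorem. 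Having named this crux, however, you do not resolve it: ``I expect this to require an inner construction \dots\ the step I would expect to be the hardest and most technical'' is a statement of intent, not an argument, and essentially all of the content of the theorem lives in that case. Moreover, even your cyclic step is incomplete: for a minimal $M\normal G$ with $M<N$ and $|M|=p$, the hypothesis gives $s\equiv t\pmod{N}$ but not $s\equiv t\pmod{M}$, so the double-edge fallback (\cref{DoubleEdge} or \cref{MultiDouble}) is not available there, and you give no reason why some hamiltonian cycle in $\Cay(G/M;S)$ must have its product $s_1s_2\cdots s_n$ generate~$M$ rather than be trivial, which is what \cref{FGL} requires. The cited source handles all of this by reducing to a hamiltonian \emph{path} problem in a Cayley digraph built inside the nilpotent group~$N$ and invoking its main theorem --- machinery that your sketch neither supplies nor replaces.
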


 \begin{rem} \label{>3}
 In the proof of our main result, we may assume $p \ge 5$, for otherwise either:
 	\begin{itemize}
	\item $|G| = 54$ is of the form $18q$, where $q$ is prime, so \cite[Prop.~9.1]{M2Slovenian-LowOrder} applies,
	or
	\item $|G| = 3^4$ is a prime power, so the main theorem of \cite{Witte-pn} applies.
	\end{itemize}
\end{rem}

\section{Assume the Sylow $p$-subgroup of~$G$ is normal}

\begin{notation}
Let:
\begin{itemize}
	\item $G$ be a group of order $27p$, where $p$ is prime, and $p \ge 5$ \csee{>3},
	\item $S$ be a minimal generating set for~$G$,
	\item $P \cong \integer_p$ be a Sylow $p$-subgroup of~$G$,
	\item $w$ be a generator of~$P$,
	and
	\item $Q$ be a Sylow $3$-subgroup of~$G$
\end{itemize}
\end{notation}

\begin{assump}
In this section, we assume that $P$ is a normal subgroup of~$G$.
\end{assump}

Therefore $G$ is a semidirect product:
	$$G = Q \ltimes P .$$
We may assume $G'$ is not cyclic of prime order (for otherwise \cref{KeatingWitte} applies). This implies $Q$ is nonabelian, and acts nontrivially on~$P$, so 
	$$ \text{$G' = Q' \times P$ is cyclic of order $3p$.} $$

\begin{notation} 
Since $Q$ is a $3$-group and acts nontrivially on $P \iso \integer_p$, we must have $p \equiv 1 \pmod{3}$. Thus, we may choose $r \in \integer$, such that
	$$ \text{$ r^3 \equiv 1 \pmod{p}$, \quad but $r \not\equiv 1 \pmod{p}$} .$$
Dividing $r^3 - 1$ by $r -1$, we see that 
	$$r^2 + r + 1 \equiv 0 \pmod{p} .$$
\end{notation}

\subsection{A lemma that applies to both of the possible Sylow $3$-subgroups}
There are only $2$ nonabelian groups of order~$27$, and we will consider them as separate cases, but, first, we cover some common ground. 

\begin{note}
Since $Q$ is a nonabelian group of order~$27$, and $G = Q \ltimes P \iso Q \ltimes \integer_p$, it is easy to see that
	$$ Q' = \Phi(Q) = Z(Q) = Z(G) = \Phi(G) .$$
\end{note}

\begin{lem} \label{S=2choices}
Assume 
	\begin{itemize}
	\item $s \in (S \cup S^{-1}) \cap Q$, such that $s$ does not centralize~$P$,
	and
	\item $c \in C_Q(P) \smallsetminus \Phi(Q)$.
	\end{itemize}
Then we may assume $S$ is either $\{s, cw\}$ or $\{s, c^2w\}$ or $\{s, scw\}$ or $\{s, sc^2 w\}$.
\end{lem}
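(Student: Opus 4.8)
The plan is to determine $S$ in three stages: reduce to a two-element generating set, normalise the second generator using automorphisms of~$G$, and finally use the freedom to invert a generator. Throughout I write $\qt x$ for the image of $x \in Q$ in $Q/Q' \iso \integer_3 \times \integer_3$. Since $C_Q(P)$ has index~$3$ in~$Q$ and contains $\Phi(Q) = Q'$, while $s \notin C_Q(P)$ and $c \in C_Q(P) \smallsetminus \Phi(Q)$, the elements $\qt s$ and $\qt c$ form a basis of $Q/Q'$. The key tool is a generation criterion: working in $G/\Phi(G) \iso (\integer_3 \times \integer_3) \ltimes P$ and computing the $P$-component of a commutator, one sees that for a noncentralizing $x \in Q$ and any $y = q w^k$ with $q \in Q$ and $k \in \integer$, we have $\langle x, y\rangle = G$ if and only if $\qt y \notin \langle \qt x\rangle$ and $k \not\equiv 0 \pmod p$ (the point being that $[x,y]$ contributes a nonzero multiple of~$w^k$ precisely when $x$ acts nontrivially on~$P$).

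Next I would reduce to $|S| = 2$. Because $G/G' \iso Q/Q'$ needs two generators, $|S| \ge 2$, and some $t \in S$ satisfies $\qt t \notin \langle \qt s\rangle$. If $t \notin Q$, then the criterion gives $\langle s, t\rangle = G$, so minimality forces $S = \{s,t\}$. If instead $t \in Q$, then $\langle s, t\rangle = Q \ne G$, and minimality forces a third generator $u \in S$ with nonzero $P$-component; applying the criterion to the pairs $\{s,u\}$ and $\{t,u\}$ shows that the image of the $Q$-part of $u$ lies in $\langle \qt s\rangle \cap \langle \qt t\rangle = 1$, so $u = z w^k$ with $z \in Q' = Z(G)$ and $S = \{s,t,u\}$. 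In this last situation $\Cay(G;S)$ is already hamiltonian: if $z = 1$ then $\langle u\rangle = P \normal G$ with $|u| = p$ prime and $G/P = Q$ a $3$-group (so $\Cay(G/P;S)$ is hamiltonian), whence \cref{NormalEasy} applies; and if $z \ne 1$ then $\langle u\rangle = G'$ is cyclic and normal of order~$3p$ with $u^p = z \in Z(G)$ (using $p \equiv 1 \pmod 3$) and $G/G'$ abelian, whence \cref{CyclicNormal2p} applies. Thus we may assume $S = \{s,t\}$ with $t = q w^k$, $\qt q \notin \langle \qt s\rangle$, and $k \not\equiv 0$.

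Finally I would normalise $t$. The map fixing $Q$ pointwise and sending $w \mapsto w^{k^{-1}}$ (inverse taken mod~$p$) is an automorphism of~$G$, because $Z(Q) \subseteq C_Q(P)$ makes it compatible with the action of $Q$ on~$P$; applying it lets me assume $k = 1$, so $t = s^i c^j z w$ with $z \in Q' = Z(Q)$ and $j \not\equiv 0 \pmod 3$. A central automorphism $\chi$ of~$Q$ (trivial on $Q/Z(Q)$, hence fixing $Z(Q)$ pointwise), extended to~$G$ by the identity on~$P$, can be chosen with $\chi(s) = s$ and $\chi(c) = c z'$ where $(z')^j = z$; then $\chi^{-1}(S) = \{s, s^i c^j w\}$, which removes the factor~$z$. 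Lastly, since $\Cay(G;\{s,t\}) = \Cay(G;\{s,t^{-1}\})$, replacing $t$ by $t^{-1}$ when $i = 2$ and re-normalising the $P$- and $Q'$-components as above replaces $s^2 c^j w$ by $s\, c^{3-j} w$, so we may take $i \in \{0,1\}$ and $j \in \{1,2\}$. These four possibilities are exactly $\{s,cw\}$, $\{s,c^2 w\}$, $\{s,scw\}$, and $\{s,sc^2 w\}$.

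I expect the main obstacle to be the automorphism bookkeeping in the last stage: verifying that the $P$-scaling maps and the central automorphisms genuinely are automorphisms of the semidirect product $G$ (compatibility with the $Q$-action on~$P$), that $\chi$ can adjust the $Z(Q)$-component of~$t$ independently while fixing~$s$, and that these constructions are uniform across the two nonabelian groups of order~$27$ (exponents $3$ and~$9$), which behave slightly differently. By contrast, the reduction to $|S| = 2$ is conceptually routine, provided the generation criterion of the first paragraph is established with the correct handling of $P$-components.
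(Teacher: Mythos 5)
Your two-generator case and the automorphism normalisations essentially reproduce the paper's argument, but there is a genuine gap in your three-generator case. Your generation criterion is valid only when the element of~$Q$ in the pair acts nontrivially on~$P$, and you apply it to the pair $\{t,u\}$ without knowing that $t \notin C_Q(P)$. When $t$ centralizes~$P$ the conclusion fails: if the $Q$-part $q_u$ of~$u$ does not centralize~$P$, then $u$ lies in a conjugate $Q^{w^m}$ of the Sylow $3$-subgroup, and $Q \cap Q^{w^m} = C_Q(P) \ni t$, so $\langle t,u\rangle \le Q^{w^m} \neq G$ even though the image of $q_u$ is not in $\langle \qt t\,\rangle$. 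Concretely, take $Q$ of exponent~$3$ with $s = x$ acting nontrivially, $y$ centralizing~$P$, and $z' \in Z(Q)$: then $S = \{x,\, y,\, xz'w\}$ is a minimal generating set, since $\langle x,y\rangle = Q$, $\langle x, xz'w\rangle = \langle x,z'\rangle \ltimes P$ has order at most~$9p$, and $\langle y, xz'w\rangle$ lies in a conjugate of~$Q$. Here the $Q$-part of the third generator is $xz' \notin \Phi(Q)$, so your deduction that $u = zw^k$ with $z$ central is false, and neither \cref{NormalEasy} nor \cref{CyclicNormal2p} applies in the way you invoke them.

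This missing case is exactly where the paper's proof does its real work. After normalising the third generator to $b = s z' w$ with $z' \in Z(G)$, the paper invokes \cref{DoubleEdge} when $z' = e$ (since then $b \equiv s \pmod{P}$ gives a double edge in $\Cay(G/P;S)$), and when $z' \neq e$ it applies \cref{FGL} to the three hamiltonian cycles $(a^{-1},s^2)^3$, $\bigl( (a^{-1},s^2)^3 \#, b \bigr)$, and $\bigl( (a^{-1},s^2)^3 \#\#, b^2 \bigr)$ in $\Cay(G/\langle z,w\rangle;S)$, verifying that at least one of their endpoints generates $Z(G) \times P$. Your proposal contains no substitute for this step, so as written it does not prove the lemma; the rest of the argument (the reduction to two generators when the complementary generator lies outside~$Q$, the scaling of~$w$, the central automorphism removing the $Z(Q)$-component, and the inversion to force $i \in \{0,1\}$) is sound and matches the paper.
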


\begin{proof}
Since $G/P \iso Q$ is a $2$-generated group of prime-power order, there must be an element~$a$ of~$S$, such that $\{s,a\}$ generates $G/P$. We may write
	$$ \text{$a = s^i c^j z w^k$,
	\quad
	 with $0 \le i \le 2$, $1 \le j \le 2$,  $z \in Z(Q)$, and $0 \le k < p$.} $$
Note that:
	\begin{itemize}
	\item By replacing~$a$ with its inverse if necessary, we may assume $i \in \{0,1\}$. 
	\item By applying an automorphism of~$G$ that fixes~$s$ and maps $c$ to $cz^j$, we may assume $z$~is trivial (since $(cz^j)^j = c^j z^{j^2} = c^jz$).
	\item By replacing $w$ with~$w^k$ if $k \neq 0$, we may assume $k \in \{0,1\}$.
	\end{itemize}
Thus,
 	$$ \text{$a = s^i c^j w^k$ \quad with $i , k \in \{0,1\}$ and $j \in \{1,2\}$.} $$

\setcounter{case}{0}

\begin{case}
Assume $k = 1$.
\end{case}
Then $\langle s, a \rangle = G$, so $S = \{s,a\}$. This yields the four listed generating sets.

\begin{case}
Assume $k = 0$.
\end{case}
Then $\langle s,a \rangle = Q$, and there must be a third element~$b$ of~$S$, with $b \notin Q$; after replacing $w$~with an appropriate power, we may write $b =  tw$ with $t \in Q$. We must have $t \in \langle s, \Phi(Q) \rangle$, for otherwise $\langle s, b \rangle = G$ (which contradicts the minimality of~$S$). Therefore 
	$$ \text{$t = s^{i'} z'$ with $0 \le i' \le 2$ and $z' \in \Phi(Q) = Z(G)$. } $$
We may assume:
	\begin{itemize}
	\item $i' \neq 0$, for otherwise $b = z' w \in S \cap \bigl( Z(G) \times P \bigr)$, so \cref{CyclicNormal2p} applies.
	\item $i' = 1$, by replacing $b$ with its inverse if necessary.
	\item $z' \neq e$, for otherwise $s$ and~$b$ provide a double edge in $\Cay(G/P; S)$, so \cref{DoubleEdge} applies.
	\end{itemize} 
Then $s^{-1} b = z' w$ generates $Z(G) \times P$. 

Consider the hamiltonian cycles 
	$$ \text{$(a^{-1},s^2)^3$, \quad $\bigl( (a^{-1},s^2)^3 \#, b \bigr)$, \quad and \quad $\bigl( (a^{-1},s^2)^3 \#\#, b^2 \bigr)$} $$
 in $\Cay(G/\langle z,w \rangle;S)$. 
 Letting $z'' = (a^{-1}s^2)^3 \in \langle z \rangle$, we see that their endpoints in~$G$ are (respectively):
 	$$\text{$z''$, \quad $z''(s^{-1} b) = z'' z' \, w$, \quad and \quad  $z''(s^{-1} b)^s (s^{-1} b) = z'' (z')^2 \, w^s w$} .$$
The final two endpoints both have a nontrivial projection to~$P$ (since $s$, being a $3$-element, cannot invert~$w$), and at least one of these two endpoints also has a nontrivial projection to $Z(G)$. Such an endpoint generates $Z(G) \times P = \langle z, w \rangle$, so \cref{FGL} provides a hamiltonian cycle in $\Cay(G;S)$.
\end{proof}

\subsection{Sylow $3$-subgroup of exponent~$3$}

\begin{lem} \label{exp3S}
Assume $Q$ is of exponent~$3$, so
	$$ Q = \langle \, x, y, z \mid x^3 = y^3 = z^3 = e, \ [x,y] = z, \ [x,z] = [y,z] = e  \,\rangle .$$
Then we may assume:
	\begin{enumerate}
	\item \label{exp3S-G}
	$w^x = w^r$, but $y$ and~$z$ centralize~$P$,
	and
	\item \label{exp3S-S}
	either:
		\begin{enumerate}
		\item $S = \{x, yw\}$,
		or
		\item $S = \{x, xyw\}$.
		\end{enumerate}
	\end{enumerate}
\end{lem}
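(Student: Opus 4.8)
The plan is to massage the generating set and the generators of~$Q$ until \cref{S=2choices} applies with $s = x$ and $c = y$, and then to collapse its four possible forms down to the two claimed ones.

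First I would pin down the action and manufacture an element of~$S$ that lies in~$Q$. Since $\Aut(P) \iso \integer_{p-1}$ is cyclic and $Q$ (having exponent~$3$) acts nontrivially, its kernel $C_Q(P)$ is a maximal subgroup of index~$3$ that contains $\Phi(Q) = \langle z\rangle$; in particular $z$ centralizes~$P$. Because $S$ generates~$G$ and $G/C_G(P) \iso Q/C_Q(P) \iso \integer_3 \neq \{e\}$, some $a \in S$ acts nontrivially on~$P$. Writing $a = qw^k$ with $q \in Q \smallsetminus C_Q(P)$, the element~$q$ acts on~$P$ as multiplication by some $t \in \{r, r^2\}$, so $1 - t$ is invertible modulo~$p$; conjugating~$S$ by $w^m$ with $m \equiv -k(1-t)^{-1} \pmod p$ then sends~$a$ to $q \in Q$ and does not affect whether $\Cay(G;S)$ is hamiltonian. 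So we may assume $S$ contains an element $s \in Q$ that does not centralize~$P$. I expect this normalization to be the main obstacle, since without it $(S \cup S^{-1}) \cap Q$ could be empty and \cref{S=2choices} would not apply.

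Next I would build the generators and prove~\pref{exp3S-G}. Set $x := s$ and choose any $y \in C_Q(P) \smallsetminus \Phi(Q)$ (possible, as $|C_Q(P)| = 9 > 3 = |\Phi(Q)|$), and put $z := [x,y]$. Since $\bar x \notin \overline{C_Q(P)} = \langle \bar y \rangle$, the images $\bar x$ and $\bar y$ form a basis of $Q/\Phi(Q)$, so $z$ generates $Q' \iso \integer_3$ and $\{x,y,z\}$ satisfies the stated exponent-$3$ presentation. By construction $y$ centralizes~$P$, and so does $z \in Q' \le C_Q(P)$, whereas $x = s$ does not; replacing~$r$ by~$r^2$ if necessary (both are primitive cube roots of unity modulo~$p$ and satisfy the same relations), we may assume $w^x = w^r$. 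This establishes~\pref{exp3S-G}.

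Finally I would apply \cref{S=2choices} and remove the redundant forms. With $s = x$ and $c = y \in C_Q(P) \smallsetminus \Phi(Q)$, that lemma shows $S$ is one of $\{x, yw\}$, $\{x, y^2w\}$, $\{x, xyw\}$, $\{x, xy^2w\}$. The map fixing~$x$ and~$w$ and sending $y \mapsto y^2$ induces an invertible transformation of $Q/\Phi(Q)$, hence is an automorphism of~$Q$; as it leaves the action on~$P$ unchanged (both $y$ and $y^2$ centralize~$P$, and $x$ is fixed), it extends to an automorphism of $G = Q \ltimes P$. Since it carries $y^2w$ to $yw$ and $xy^2w$ to $xyw$, applying it when needed reduces the two ``$y^2$'' options to $\{x, yw\}$ and $\{x, xyw\}$, which is~\pref{exp3S-S}.
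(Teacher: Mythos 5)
Your proof is correct and takes essentially the same route as the paper's: fix $y \in C_Q(P)\smallsetminus\Phi(Q)$ and $x$ outside $C_Q(P)$, apply \cref{S=2choices} with $s=x$, $c=y$, and use the automorphism fixing $x$ and $w$ and sending $y\mapsto y^2$ to collapse four cases to two. Your explicit conjugation by a power of~$w$ merely spells out the paper's terse ``we may assume $x \in S$,'' and your relabelling of~$r$ plays the same role as the paper's replacement of $x$ by its inverse.
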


\begin{proof}
\pref{exp3S-G} Since $Q$ acts nontrivially on~$P$, and $\Aut(P)$ is cyclic, but $Q/\Phi(Q)$ is not cyclic, there must be elements $a$ and~$b$ of $Q \smallsetminus \Phi(Q)$, such that $a$~centralizes~$P$, but $b$~does not. (And $z$ must centralize~$P$, because it is in $Q'$.) By applying an automorphism of~$Q$, we may assume $a = y$ and $b = x$. Furthermore, we may assume $w^x = w^r$ by replacing $x$ with its inverse if necessary.

\pref{exp3S-S} $S$ must contain an element that does not centralize~$P$, so we may assume $x \in S$. 
By applying \cref{S=2choices} with $s = x$ and $c = y$, we see that we may assume $S$ is:
	$$ \text{$\{x, yw\}$ or $\{x, y^2w\}$ or $\{x, xyw\}$ or $\{x, xy^2w\}$} . $$
But there is an automorphism of~$G$ that fixes $x$ and~$w$, and sends $y$ to~$y^2$, so we need only consider $2$ of these possibilities.
\end{proof}

\begin{prop}
Assume, as usual, that\/ $|G| = 27p$, where $p$ is prime, and that $G$ has a normal Sylow $p$-subgroup. If the Sylow $3$-subgroup $Q$ is of exponent\/~$3$, then\/ $\Cay(G;S)$ has a hamiltonian cycle.
\end{prop}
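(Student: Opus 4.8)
The plan is to apply the Factor Group Lemma (\cref{FGL}) with the normal subgroup $N = P = \langle w \rangle$. By \cref{exp3S}, it suffices to treat the two generating sets $S = \{x, yw\}$ and $S = \{x, xyw\}$, and in both cases $w^x = w^r$ while $y$ and $z$ centralize~$P$, where $r^2 + r + 1 \equiv 0 \pmod p$ and $r \not\equiv 1 \pmod p$. Since $\langle w \rangle$ is the kernel of the projection $G \to G/P \iso Q$, the images $\qt{S}$ of $S$ in $Q$ are $\{x, y\}$ and $\{x, xy\}$ respectively, so my first step would be to write down, for each case, an explicit hamiltonian cycle $(\qt{s}_1, \ldots, \qt{s}_{27})$ in the Cayley graph $\Cay(Q; \qt{S})$ of the Heisenberg group $Q$ of order~$27$. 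Such a cycle certainly exists (since $Q$ is a group of prime-power order); the point is to have one concretely in hand so that its voltage can be computed.

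Given such a cycle, \cref{FGL} produces a hamiltonian cycle in $\Cay(G;S)$ as soon as the lifted product $s_1 s_2 \cdots s_{27}$ generates $P$. This product automatically lies in $P = \langle w \rangle$, since it projects to the identity of~$Q$, so writing it as $w^M$, the only thing to verify is that $M \not\equiv 0 \pmod p$. Because $y$ and~$z$ centralize~$w$ while $x$ conjugates $w$ to~$w^r$, pushing every factor of~$w$ rightward through the word shows that only the ``vertical'' edges (those labelled $yw$, resp.\ $xyw$) contribute to~$M$, each contributing $\pm r^{e}$, where $e$ is the net $x$-exponent of the part of the word following that edge, read modulo~$3$. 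Grouping the vertical edges according to the residue $e \in \{0,1,2\}$ yields $M \equiv N_0 + N_1 r + N_2 r^2 \pmod p$, where $N_e$ is the signed number of vertical edges in class~$e$.

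Here is the crux, and where I expect the real work to be. Since $1 + r + r^2 \equiv 0 \pmod p$, the quantity $M \equiv N_0 + N_1 r + N_2 r^2$ vanishes precisely when $N_0 \equiv N_1 \equiv N_2 \pmod p$; as the $N_e$ are small integers and $p \ge 5$, this occurs exactly when $N_0 = N_1 = N_2$. The difficulty is that the symmetric hamiltonian cycles one first reaches for distribute their vertical edges evenly among the three residues of the $x$-coordinate, giving $N_0 = N_1 = N_2$, so that the voltage~$M$ is annihilated by the relation $1 + r + r^2 \equiv 0$. (Already in the abelianization $G/G' \iso \integer_3 \times \integer_3$ the obvious hamiltonian cycles have this balanced form.) I therefore expect the main task to be the explicit construction, for each of the two generating sets, of an \emph{asymmetric} hamiltonian cycle in $\Cay(Q; \qt{S})$ whose vertical movement is deliberately unbalanced across the three columns $x \equiv 0, 1, 2 \pmod 3$, so that the $N_e$ are not all equal, together with the routine but lengthy verification that the chosen sequence really is a hamiltonian cycle and that its $w$-exponent takes the claimed nonzero value. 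The second case, $\qt{S} = \{x, xy\}$, needs a separate cycle, since here the vertical generator $xy$ itself carries a nontrivial $x$-exponent and so enters the phase count $e$ differently.

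Finally, should the direct computation of~$M$ become awkward, or in any degenerate configuration where the natural cycle resists being unbalanced, I would fall back on the quotient-multigraph tools: exhibiting a repeated edge in $P \backslash \Cay(G;S)$ and invoking \cref{MultiDouble} or \cref{DoubleEdge}, or, when a suitable central generator happens to be available, \cref{CyclicNormal2p}. These allow one to sidestep the exact arithmetic of the voltage at the cost of locating a double edge in the quotient, and together with the explicit cycles above they should dispatch both generating sets and hence complete the proposition.
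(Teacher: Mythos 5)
Your overall strategy is the same as the paper's: apply \cref{FGL} with $N = P = \langle w\rangle$, exhibit an explicit hamiltonian cycle in $\Cay(Q;\overline{S})$ for each of the two generating sets supplied by \cref{exp3S}, and compute the voltage by pushing each occurrence of~$w$ through the word, obtaining $M \equiv N_0 + N_1 r + N_2 r^2 \pmod p$. The genuine gap is in your crux claim that $M$ vanishes ``precisely when $N_0 = N_1 = N_2$.'' Reducing by $r^2 \equiv -r-1$ gives $M \equiv (N_0 - N_2) + (N_1 - N_2)\,r \pmod p$, which can vanish for the particular cube root~$r$ at hand even when the $N_e$ are unequal small integers: this happens exactly when $p$ divides $(N_0-N_2)^2 - (N_0-N_2)(N_1-N_2) + (N_1-N_2)^2$, which is entirely possible for $p \ge 7$. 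The paper's own second case illustrates this: for $S = \{x, xyw\}$ its cycle has voltage $w^{3(3r+2)} = w^{9r+6}$, which is \emph{trivial} when $p = 7$ and $r = 4$ (one of the two primitive cube roots of unity mod~$7$). So constructing an ``asymmetric'' cycle is not by itself enough to conclude that the endpoint generates~$P$.

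The missing ingredient is the device the paper uses to close exactly this loophole: the normalization in \cref{exp3S} leaves one free to take $r$ to be \emph{either} primitive cube root of unity mod~$p$, and once the $N_e$ are not all congruent mod~$p$, the expression $(N_0-N_2)+(N_1-N_2)r$ vanishes for at most one of the two choices; hence $r$ may be chosen to make $M \not\equiv 0$. (In the paper's first case the cycle happens to give $M = 3r$, which is nonzero outright.) With that observation added --- or with your fallback to \cref{MultiDouble} actually carried out rather than merely mentioned --- your plan goes through; as written, the inference ``unbalanced cycle $\Rightarrow M \ne 0$'' is false.
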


\begin{proof}
We write $\qt{\phantom{x}}$ for the natural homomorphism from~$G$ to $\qt{G} = G/P$.
From \fullcref{exp3S}{S}, we see that we need only consider two possibilities for~$S$.

\setcounter{case}{0}

\begin{case}
Assume $S = \{x, yw\}$.
\end{case}
For $a = x$ and $b = yw$, we have the following hamiltonian cycle in $\Cay(G/P;S)$:
\begin{align*}\begin{array}{ccccccccccccc}
&& \qt{e}  \bya {x } \bya {x^2 } \byb {x^2 y } \byai { xyz} \byai {yz^2}
\\
  \byb { y^2 z^2} \byb {z^2 } \bya {xz^2 } \bya {x^2z^2 } \byb {x^2yz^2 } \bya { yz}
   \\
    \bya {xy } \byb {xy^2} \bya {x^2y^2z } \byb { x^2z} \byb {x^2yz } \byai { xyz^2 }
    \\
  \byb {xy^2z^2 } \bya {x^2y^2 } \bya { y^2z} \byb { z} \bya {xz } \bybi { xy^2z}
  \\
   \bya { x^2y^2z^2} \bya {y^2 } \bybi { y} \bybi {e}
 \end{array}\end{align*}
Its endpoint in~$G$ is
	\begin{align*}
	a^2 b a^{-2} & b^2a^2 b a^2 b a b^2 a^{-1} b a^2 b a b^{-1} a^2 b^{-2}
	\\&=
	x^2 yw x^{-2} (yw)^2x^2 yw x^2 yw x (yw)^2 x^{-1} yw x^2 yw x (yw)^{-1} x^2 (yw)^{-2}
	\\&= 
	x^2 yw x y^2w^2 x^2 yw x^2 yw x y^2w^2 x^2 yw x^2 yw x y^2w^{-1} x^2 yw^{-2}
	. \end{align*}
Since the walk is a hamiltonian cycle in $G/P$, we know that this endpoint is in $P = \langle w \rangle$. So all terms except powers of~$w$ must cancel. Thus, we need only calculate the contribution from each appearance of~$w$ in this expression. To do this, note that if a term~$w^i$ is followed by a net total of $j$~appearances of~$x$, then the term contributes a factor of $w^{i r^j}$ to the product. So the endpoint in~$G$ is:
	$$ w^{r^{13}} w^{2r^{12}} w^{r^{10}} w^{r^8} w^{2r^7} w^{r^5} w^{r^3} w^{-r^2} w^{-2} . $$
Since $r^3 \equiv 1 \pmod{p}$, this simplifies to
	\begin{align*}
	w^r w^{2} w^{r} w^{r^2} w^{2r} w^{r^2} w w^{-r^2} w^{-2}
	&=  
	w^{r+2+r+r^2+2r+r^2+1-r^2-2}
	\\&= 
	w^{r^2+4r+1}
	= 
	w^{r^2+r+1}w^{3r}
	= w^0 w^{3r}
	= w^{3r}
	.
	\end{align*}
Since $p \nmid 3r$, this endpoint generates $P$, so \cref{FGL} provides a hamiltonian cycle in $\Cay(G;S)$.

\begin{case}
Assume $S = \{x, xyw\}$.
\end{case}
For $a = x$ and $b = xyw$, we have the hamiltonian cycle
	$$ \bigl( (a,b^2)^3\#,a \bigr)^3 $$
in $\Cay(G/P;S)$. Its endpoint in~$G$ is
	\begin{align*}
	 \bigl( (ab^2)^3 \, b^{-1}a \bigr)^3
	&=  \Bigl( \bigl( x(xyw)^2 \bigr)^3 \, (xyw)^{-1}x \Bigr)^3
	=  \Bigl( \bigl( x(x^2 y^2 w^{r+1}) \bigr)^3 \, (w^{-1} y^{-1} x^{-1}) x \Bigr)^3
	\\&=  \Bigl( \bigl( y^2 w^{r+1} \bigr)^3 \, (w^{-1} y^{-1} )  \Bigr)^3
	=  \Bigl( w^{3(r+1)}  \, (w^{-1} y^{-1} )  \Bigr)^3
	=  \Bigl( y^{-1} w^{3r+2}   \Bigr)^3
	\\&= w^{3(3r+2)}
	. \end{align*}
Since we are free to choose~$r$ to be either of the two primitive cube roots of~$1$ in $\integer_p$, and the equation $3r + 2 = 0$ has only one solution in~$\integer_p$, we may assume $r$~has been selected to make the exponent nonzero. Then \cref{FGL} provides a hamiltonian cycle in $\Cay(G;S)$.
\end{proof}

\subsection{Sylow $3$-subgroup of exponent~$9$}

\begin{lem} \label{GensExp9}
Assume $Q$ is of exponent~$9$, so
	$$ Q = \langle\, x, y \mid x^9 = y^3 = e, \ [x,y] = x^3 \,\rangle .$$
There are two possibilities for~$G$, depending on whether $C_Q(P)$ contains an element of order~$9$ or not.
\begin{enumerate}
	\item  \label{Exp9-no9}
Assume $C_Q(P)$ does not contain an element of order~$9$. Then we may assume $y$ centralizes~$P$, but $w^x = w^r$. Furthermore, we may assume:
	\begin{enumerate}
		\item \label{Exp9-no9-x+yw}
		$S = \{x, yw \} $, 
		or
		\item \label{Exp9-no9-x+xyw}
		$S = \{x, xyw\}$.
	\end{enumerate}
	\item \label{Exp9-9cent}
Assume $C_Q(P)$ contains an element of order~$9$. Then we may assume $x$ centralizes~$P$, but $w^y = w^r$. Furthermore, we may assume:
	\begin{enumerate}
		\item \label{Exp9-9cent-xw+y}
		$S = \{xw, y\} $,
		\item \label{Exp9-9cent-xyw+y}
		$S = \{xyw, y\}$,
		\item \label{Exp9-9cent-xy+xw}
		$S = \{xy,xw\}$,
		or
		\item \label{Exp9-9cent-xy+x2yw}
		$S = \{xy, x^2yw\}$.
	\end{enumerate}
\end{enumerate}
\end{lem}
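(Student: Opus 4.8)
The plan is to mirror the proof of \cref{exp3S}: first pin down the centralizer $C_Q(P)$ and the action of~$Q$ on~$P$, then apply \cref{S=2choices} to cut the generating sets down to a short list, and finally use automorphisms of~$G$ to collapse that list to the stated forms. The structural input is this: since $Q/C_Q(P)$ embeds in the cyclic group $\Aut(P) \iso \integer_{p-1}$ while $Q/Q' \iso \integer_3 \times \integer_3$ has no cyclic quotient larger than~$\integer_3$, the nontrivial action forces $Q/C_Q(P) \iso \integer_3$, so $|C_Q(P)| = 9$ and $Q' = \langle x^3 \rangle \le C_Q(P)$. Hence $C_Q(P)$ is one of the four index-$3$ (automatically normal) subgroups containing $\Phi(Q) = \langle x^3 \rangle$, namely $\langle x \rangle$, $\langle xy \rangle$, $\langle xy^2 \rangle$, and $\langle x^3, y \rangle$; a direct check shows the first three are cyclic of order~$9$, while $\langle x^3, y \rangle$ is elementary abelian and is the unique subgroup of order~$9$ with no element of order~$9$. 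This dichotomy is exactly the division into \pref{Exp9-no9} and \pref{Exp9-9cent}. In either part, if the non-centralizing generator I want does not already lie in~$Q$, I would first conjugate~$S$ by a suitable power of~$w$: a non-central element acts on~$P$ with no nonzero fixed vector, so this inner automorphism moves it into~$Q$, guaranteeing the hypothesis of \cref{S=2choices}.

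For \pref{Exp9-no9} we have $C_Q(P) = \langle x^3, y \rangle$, so $y$ centralizes~$P$ and $x$ does not, and I choose~$r$ (or invert~$x$) so that $w^x = w^r$. Here every element of order~$3$ already lies in $\langle x^3, y \rangle = C_Q(P)$, so any non-centralizing element of~$Q$ has order~$9$ and can be taken to be~$x$; \cref{S=2choices} with $s = x$ and $c = y$ then leaves only $\{x, yw\}$, $\{x, y^2w\}$, $\{x, xyw\}$, $\{x, xy^2w\}$. The two ``$y^2$'' sets collapse onto the ``$y$'' sets by combining the automorphism $w \mapsto w^{-1}$ (legitimate since $[y,w]=e$), the automorphisms $x \mapsto x^a$ with $a \equiv 1 \pmod 3$, inversion of a generator, and the freedom to swap the two primitive cube roots of~$1$; this yields \pref{Exp9-no9-x+yw} and \pref{Exp9-no9-x+xyw}.

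For \pref{Exp9-9cent}, $C_Q(P)$ is cyclic of order~$9$. Exhibiting automorphisms such as $x \mapsto xy$ and $x \mapsto x^5 y$ (each fixing~$y$) shows that $\Aut(Q)$ is transitive on the three cyclic order-$9$ subgroups, so I may assume $C_Q(P) = \langle x \rangle$; thus $x$ centralizes~$P$, $y$ does not, and I set $w^y = w^r$. The crucial difference from \pref{Exp9-no9} is that now $Q \smallsetminus \langle x \rangle$ contains non-centralizing elements of both order~$3$ (represented by~$y$) and order~$9$ (represented by~$xy$), and these two $\Aut(Q)$-classes produce the two families of generating sets. Applying \cref{S=2choices} with $c$ a power of~$x$ and $s \in \{y, xy\}$, and reducing with the automorphisms $x \mapsto x^a$ (all of which fix~$y$ and~$w$ and preserve the action here, since $x$ centralizes~$P$), the $\langle x \rangle$-preserving automorphisms that fix~$xy$ (for instance $x \mapsto x^7$, $y \mapsto x^3 y$), and generator inversions, should land the order-$3$ family on \pref{Exp9-9cent-xw+y}, \pref{Exp9-9cent-xyw+y} and the order-$9$ family on \pref{Exp9-9cent-xy+xw}, \pref{Exp9-9cent-xy+x2yw}.

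The main obstacle will be the bookkeeping in \pref{Exp9-9cent} for the order-$9$ family, where the non-centralizing generator is~$xy$. There the convenient automorphisms $x \mapsto x^a$ move~$xy$ itself, so to simplify the second generator I must restrict to the automorphisms that fix~$xy$—these realize only $x \mapsto x^a$ with $a \in \{1,4,7\}$—and then recover the remaining powers of~$x$ by inverting the second generator, which negates its $x$-exponent because $x$ centralizes~$P$. Verifying that these coupled moves sweep out every set produced by \cref{S=2choices} and land it on exactly one of \pref{Exp9-9cent-xy+xw} and \pref{Exp9-9cent-xy+x2yw}, with no stray form left over, is the delicate part; the analogous but easier reductions in the other three families should go through with the freer automorphisms $x \mapsto x^a$ and $w \mapsto w^t$.
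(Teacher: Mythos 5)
Your overall strategy is the same as the paper's: pin down $C_Q(P)$ (your dichotomy between the elementary abelian subgroup $\langle x^3, y\rangle$ and a cyclic subgroup of order~$9$ is exactly right, and your justification that $|C_Q(P)|=9$ is more explicit than the paper's), move a non-centralizing element of~$S$ into~$Q$, apply \cref{S=2choices}, and collapse the resulting lists by automorphisms of~$G$. Part~\pref{Exp9-no9} and the order-$3$ subfamily of part~\pref{Exp9-9cent} do go through with the tools you describe.

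The gap sits precisely at the spot you flag as ``delicate,'' and the step fails as planned. With $s = xy$ and $c = x$, \cref{S=2choices} produces $\{xy, xw\}$, $\{xy, x^2w\}$, $\{xy, xy\,xw\}$, and $\{xy, xy\,x^2w\}$. The first three do land on \pref{Exp9-9cent-xy+xw} and \pref{Exp9-9cent-xy+x2yw} via your automorphisms $x \mapsto x^a$, $y \mapsto x^{1-a}y$ with $a \in \{1,4,7\}$, together with $w \mapsto w^t$ and inversion (e.g.\ $a=4$ sends $xy\,xw = x^{-1}yw$ to $x^2yw$). But the fourth set is $\{xy, x^6yw\}$ (since $xyx^2 = x^{-3}y$), and $x^6yw$ has order~$3$ because $(x^6y)^3 = e$ and $w^{1+r+r^2} = e$; the second generators of \pref{Exp9-9cent-xy+xw} and \pref{Exp9-9cent-xy+x2yw} have orders $9p$ and~$9$ respectively, so no automorphism of~$G$ --- whether or not it fixes $xy$, and even allowing generator inversion --- can carry this set to either target. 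The missing idea is the paper's one-line observation: $x^6yw$ is a non-centralizing element of order~$3$ belonging to~$S$, so this generating set falls into your order-$3$ family and reduces to \pref{Exp9-9cent-xw+y} or \pref{Exp9-9cent-xyw+y} instead. In other words, the two families in part~\pref{Exp9-9cent} are not cleanly indexed by the $\Aut(Q)$-class of one chosen non-centralizing generator; you must take $s$ of order~$3$ whenever $S$ contains \emph{any} non-centralizing element of order~$3$, and only the sets with no such element constitute the order-$9$ case.
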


\begin{proof}
(\ref{Exp9-no9})
Since $x$ has order~$9$, we know that it does not centralize~$P$. But $x^3$ must centralize~$P$ (since $x^3$ is in~$G'$). Therefore, we may assume $w^x = x^r$ (by replacing $x$ with its inverse if necessary). Also, since $Q/C_Q(P)$ must be cyclic (because $\Aut(P)$ is cyclic), but $C_G(P)$ does not contain an element of order~$9$, we see that $C_Q(P)$ contains every element of order~$3$, so $y$ must be in $C_Q(P)$.

 Since $S$ must contain an element that does not centralize~$P$, we may assume $x \in S$. 
By applying \cref{S=2choices} with $s = x$ and $c = y$, we see that we may assume $S$ is:
	$$ \text{$\{x, yw\}$ or $\{x, y^2w\}$ or $\{x, xyw\}$ or $\{x, xy^2w\}$} . $$
The second generating set need not be considered, because $(y^2w)^{-1} = y w^{-1} = yw'$, so it is equivalent to the first. Also, the fourth generating set can be converted into the third, since there is an automorphism of~$G$ that fixes~$y$, but takes $x$ to $x y w$ and $w$ to~$w^{-1}$.

\medskip
(\ref{Exp9-9cent}) We may assume $x \in C_Q(P)$, so $C_Q(P) = \langle x \rangle$.

We know that $S$ must contain an element~$s$ that does not centralize~$P$, and there are two possibilities: either 
	\begin{enumerate} 
	\renewcommand{\theenumi}{\Roman{enumi}}
	\item \label{GensExp9Pf-9cent-3}
 $s$~has order~$3$,
	or
	\item \label{GensExp9Pf-9cent-9}
$s$~has order~$9$.
	\end{enumerate}
We consider these two possibilities as separate cases.

\begingroup

 \renewcommand{\thecase}{\ref{GensExp9Pf-9cent-3}}
\begin{case}
Assume $s$~has order\/~$3$.
\end{case}
We may assume $s = y$.
Letting $c = x$, we see from \cref{S=2choices} that we may assume $S$ is either 
	$$ \text{$\{y, xw\}$ or $\{y, x^2w\}$ or $\{y, yxw\}$ or $\{y, yx^2w\}$.} $$
The second and fourth generating sets need not be considered, because there is an automorphism of~$G$ that fixes $y$ and~$w$, but takes $x$ to~$x^2$. Also, the third generating set may be replaced with $\{y,xyw\}$, since there is an automorphism of~$G$ that fixes~$y$ and~$w$, but takes $x$ to~$y^{-1} x y$.

 \renewcommand{\thecase}{\ref{GensExp9Pf-9cent-9}}
\begin{case}
Assume $s$~has order\/~$9$.
\end{case}
We may assume $s = xy$.
Letting $c = x$, we see from \cref{S=2choices} that we may assume $S$ is either 
	$$ \text{$\{xy, xw\}$ or $\{xy, x^2w\}$ or $\{xy, xyxw\}$ or $\{xy, xyx^2w\}$.} $$
The second generating set is equivalent to $\{xy, xw\}$, since the automorphism of~$G$ that sends $x$ to~$x^4$, $y$ to~$x^{-3}y$, and $w$ to~$w^{-1}$ maps it to $\{xy, (xw)^{-1}\}$.
The third generating set is mapped to $\{xy, x^2yw\}$ by the automorphism that sends $x$ to~$x [x,y]$ and $y$ to~$[x,y]^{-1}y$.
The fourth generating set need not be considered, because $xyx^2 w$ is an element of order~$3$ that does not centralize~$P$, which puts it in the previous case.
\endgroup
\end{proof}

\begin{prop}
Assume, as usual, that\/ $|G| = 27p$, where $p$ is prime, and that $G$ has a normal Sylow $p$-subgroup. If the Sylow $3$-subgroup $Q$ is of exponent\/~$9$, then\/ $\Cay(G;S)$ has a hamiltonian cycle.
\end{prop}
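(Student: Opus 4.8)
The plan is to dispatch the six generating sets produced by \cref{GensExp9} one at a time, following exactly the template of the exponent-$3$ proposition: for each~$S$ I would exhibit an explicit hamiltonian cycle in the quotient $\Cay(G/P;S) \iso \Cay(Q;S)$, compute the corresponding endpoint in~$G$, observe that it lies in $P = \langle w \rangle$ (since the walk closes up in~$Q$), and then verify that it generates~$P$, so that \cref{FGL} produces a hamiltonian cycle upstairs. In part~\pref{Exp9-no9} the generator that moves~$P$ is~$x$, with $w^x = w^r$; in part~\pref{Exp9-9cent} it is~$y$, with $w^y = w^r$ (and, since $x$ centralizes~$P$, conjugation by the products $xy$, $xw$, $x^2yw$ acts on~$w$ only through their $y$-content, so $w^{xy} = w^r$ while $w^{xw} = w$).

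The computational engine is the same bookkeeping rule as before: a factor $w^i$ standing to the left of a net total of $j$~copies of the acting generator contributes $w^{i r^j}$ to the product, and since $r^3 \equiv 1 \pmod{p}$ only the residue of~$j$ modulo~$3$ matters, while $r^2 + r + 1 \equiv 0 \pmod{p}$ collapses the resulting sums. For the highly symmetric generating sets I would use power-of-a-block cycles of the shape $\bigl( (a, b^2)^3 \#, a \bigr)^3$, which tend to yield an endpoint whose $w$-exponent is a clean expression such as a multiple of $r+1$ or of $3r+2$; for the sets containing $yw$ or $xyw$ I expect to need an explicitly written-out $27$-step cycle, as in Case~1 of the exponent-$3$ proof.

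The crux is guaranteeing that the $w$-exponent is nonzero modulo~$p$ in every one of the six cases. The main lever is that $r$ may be taken to be either primitive cube root of~$1$ modulo~$p$, so whenever the exponent is a polynomial in~$r$ that does not vanish at both roots, one choice of~$r$ suffices (exactly as the factor $3r+2$ was handled previously). Should a particular case refuse to cooperate — the endpoint collapsing to the identity for both choices of~$r$ — I would fall back on the earlier machinery: a generator of the form $z'w$ with $z' \in Z(G)$ triggers \cref{CyclicNormal2p}, a repeated label in the quotient gives \cref{DoubleEdge}, and a double edge in a prime-index quotient multigraph gives \cref{MultiDouble}. I expect the entangled sets $\{xy, x^2yw\}$ and $\{xyw, y\}$ to be the most delicate, since there the order-$9$ element~$x$ and the relation $[x,y] = x^3$ interact with the conjugation action, so the reduction of the endpoint must be carried out with particular care to separate the genuine $w$-content from the $Q$-part that is guaranteed to cancel.
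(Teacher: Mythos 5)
Your high-level strategy coincides with the paper's: find a hamiltonian cycle in $\Cay(G/P;S)$, compute its endpoint in~$G$ by the exponent-bookkeeping rule, show the endpoint generates $P$ for at least one of the two choices of the primitive cube root~$r$, and invoke \cref{FGL}. Your identification of the conjugation data is also correct (in case~\pref{Exp9-no9} of \cref{GensExp9} only $x$ moves~$w$; in case~\pref{Exp9-9cent} one has $w^{xy}=w^{r}$ and $w^{xw}=w$, since $x$ centralizes~$P$).

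The genuine gap is that the proof is never carried out: for none of the six generating sets do you exhibit a hamiltonian cycle in the quotient, and for none do you compute the endpoint or verify that it is nonzero modulo~$p$ for some choice of~$r$. In a result of this kind that explicit data \emph{is} the proof; saying you ``would exhibit'' a cycle, that you ``expect'' the exponent to be a clean multiple of $r+1$ or $3r+2$, and that you would ``fall back'' on \cref{CyclicNormal2p}, \cref{DoubleEdge} or \cref{MultiDouble} if a case refuses to cooperate, leaves the essential work undone (and the fallbacks are in fact never needed). For comparison, the paper does not hunt for six separate cycles: it uses the single $27$-step closed walk \pref{HC}, verified to be a hamiltonian cycle under exactly two sets of hypotheses on the images of $a,b$ in $G/P$ (one with $|\qt{b}|=3$ and $\qt{a^b}=\qt{a^4}$, one with $|\qt{b}|=9$, $\qt{a^b}=\qt{a^7}$, $\qt{b^3}=\qt{a^6}$), together with one endpoint formula for each of the two placements of the $w$-part (in $a$ or in $b$); specializing these yields exponents $6r$, $3(r+1)$, $-3(r+2)$, $3$, $3(r-1)$, $-3$ in the six cases, each of which vanishes for at most one value of~$r$. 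To complete your version you would need to supply the analogous explicit cycles and computations.
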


\begin{proof}
We will show that, for an appropriate choice of $a$ and~$b$ in~$S \cup S^{-1}$, the walk
	\begin{align} \label{HC}
	 (a^3, b^{-1}, a, b^{-1}, a^4, b^2, a^{-2}, b, a^2, b, a^3, b, a^{-1}, b^{-1}, a^{-1}, b^{-2} ) 
	 \end{align}
provides a hamiltonian cycle in $\Cay(G/P;S)$ whose endpoint in~$G$ generates~$P$ (so \cref{FGL} applies).

We begin by verifying two situations in which \pref{HC} is a hamiltonian cycle:

\begin{enumerate}
\renewcommand{\theenumi}{HC\arabic{enumi}}
\item \label{HC3}
If $|\qtit{a}| = 9$, $|\qtit{b}| = 3$, and $\qtit{a^b} = \qtit{a^4}$ in $\qtit{G} = G/P$, then we have the hamiltonian cycle
$$\begin{array}{ccccccccccccccc}
&&\qtit{e}
\bya {a}
\bya {a^2}
\bya {a^3}
\bybi {a^3b^2}
\bya {a^7b^2}
\bybi {a^7b}
\\
\bya {a^5b}
\bya {a^3b}
\bya {ab}
\bya {a^8b}
\byb {a^8b^2}
\byb {a^8}
\byai {a^7}
\\
\byai {a^6}
\byb {a^6b}
\bya {a^4b}
\bya {a^2b}
\byb {a^2b^2}
\bya {a^6b^2}
\bya {ab^2}
\\
\bya {a^5b^2}
\byb {a^5}
\byai {a^4}
\bybi {a^4b^2}
\byai {b^2}
\bybi {b}
\bybi {e}
\end{array}$$

\item  \label{HC9}
If $|\qtit{a}| = 9$, $|\qtit{b}| = 9$, $\qtit{a^b} = \qtit{a^7}$, and $\qtit{b^3} = \qtit{a^6}$ in $\qtit{G} = G/P$, then we have the hamiltonian cycle
$$\begin{array}{ccccccccccccccc}
&&\qtit{e}
\bya {a}
\bya {a^2}
\bya {a^3}
\bybi {a^6b^2}
\bya {a^4b^2}
\bybi {a^4b}
\\
\bya {a^8b}
\bya {a^3b}
\bya {a^7b}
\bya {a^2b}
\byb {a^2b^2}
\byb {a^8}
\byai {a^7}
\\
\byai {a^6}
\byb {a^6b}
\bya {ab}
\bya {a^5b}
\byb {a^5b^2}
\bya {a^3b^2}
\bya {ab^2}
\\
\bya {a^8b^2}
\byb {a^5}
\byai {a^4}
\bybi {a^7b^2}
\byai {b^2}
\bybi {b}
\bybi {e}
\end{array}$$
\end{enumerate}

To calculate the endpoint in~$G$, fix $r_1,r_2 \in \integer_p$, with
	$$ \text{$w^a = w^{r_1}$ and $w^b = w^{r_2}$} ,$$
and write
	$$ \text{$a = \ul a w_1$ and $b = \ul b w_2$, where $\ul a, \ul b \in Q$ and $w_1,w_2 \in P$.} $$
Note that if an occurrence of~$w_i$ in the product is followed by a net total of $j_1$~appearances of~$\ul a$ and a net total of $j_2$~appearances of~$\ul b$, then it contributes a factor of $w_i^{r_1^{j_1}r_2^{j_2}}$ to the product. 
(A similar occurrence of $w_i^{-1}$ contributes a factor of $w_i^{-r_1^{j_1}r_2^{j_2}}$ to the product.)
Furthermore, since $r_1^3 \equiv r_2^3 \equiv 1 \pmod{p}$, there is no harm in reducing $j_1$ and~$j_2$ modulo~$3$.

We will apply these considerations only in a few particular situations:
\begin{enumerate} \label{endpt} 
\renewcommand{\theenumi}{E\arabic{enumi}}
 \setlength{\itemsep}{\smallskipamount}

	\item  \label{endpt-b=w}
	Assume $w_1 = e$ (so $a \in Q$ and $\ul a = a$).  Then the endpoint of the path in~$G$ is 
	\begin{align*}
	 a^3 & b^{-1} a b^{-1} a^4 b^2 a^{-2} b a^2 b a^3 b a^{-1} b^{-1} a^{-1} b^{-2} 
	\\&= a^3 (\ul b w_2)^{-1} a (\ul b w_2)^{-1} a^4 (\ul b w_2)^2 a^{-2} (\ul b w_2) a^2 
	\\& \qquad \times (\ul b w_2) a^3 (\ul b w_2) a^{-1} (\ul b w_2)^{-1} a^{-1} (\ul b w_2)^{-2} 
	\\&= a^3 ( w_2^{-1} \ul b^{-1}) a ( w_2^{-1} \ul b^{-1}) a^4 (\ul b w_2\ul b w_2) a^{-2} (\ul b w_2)  a^2
	\\& \qquad \times  (\ul b w_2) a^3 (\ul b w_2) a^{-1} ( w_2^{-1} \ul b^{-1}) a^{-1} (w_2^{-1} \ul b^{-1}w_2^{-1} \ul b^{-1})
	. \end{align*}
By the above considerations, this simplifies to $w_2^m$, where
	\begin{align*}
	m
	&= -1 - r_1^2 r_2 + r_1r_2 + r_1 + r_2^2 + r_1r_2 + r_1 - r_1^2 - r_2 - r_2^2
	\\&= -r_1^2r_2 - r_1^2 + 2r_1r_2 + 2r_1 - r_2 - 1
	.\end{align*}
Note that:
	\begin{enumerate}
 	\setlength{\itemsep}{\smallskipamount}
	\item  \label{endpt-b=w-r2=1}
	If $r_1 \neq 1$ and $r_2 = 1$, then $m$ simplifies to $6r_1$, because $r_1^2 + r_1 + 1 \equiv 0 \pmod{p}$ in this case.
	\item  \label{endpt-b=w-not1}
	If $r_1 \neq 1$ and $r_2 \neq 1$, then $m$ simplifies to 
		$3r_1(r_2+1)$,
	because $r_1^2 + r_1 + 1 \equiv r_2^2 + r_2 + 1 \equiv 0 \pmod{p}$ in this case.
	\end{enumerate}

	\item  \label{endpt-a=w}
	Assume $w_2 = e$ (so $b \in Q$ and $\ul b = b$).  Then the endpoint of the path in~$G$ is 
	\begin{align*}
	 a^3 & b^{-1} a b^{-1} a^4 b^2 a^{-2} b a^2 b a^3 b a^{-1} b^{-1} a^{-1} b^{-2} 
	\\&= (\ul a w_1)^3 b^{-1} (\ul a w_1) b^{-1} (\ul a w_1)^4 b^2 (\ul a w_1)^{-2} b (\ul a w_1)^2 b (\ul a w_1)^3 b (\ul a w_1)^{-1} b^{-1} (\ul a w_1)^{-1} b^{-2} 
	\\&= (\ul a w_1 \ul a w_1 \ul a w_1) b^{-1} (\ul a w_1) b^{-1} (\ul a w_1 \ul a w_1 \ul a w_1 \ul a w_1) b^2 (w_1^{-1} \ul a^{-1} w_1^{-1} \ul a^{-1} ) 
		\\ &\hskip 1in 
		\times b (\ul a w_1 \ul a w_1) b (\ul a w_1 \ul a w_1 \ul a w_1) b (w_1^{-1} \ul a^{-1}) b^{-1} (w_1^{-1} \ul a^{-1}) b^{-2} 	
	. \end{align*}
By the above considerations, this simplifies to $w_1^m$, where
	\begin{align*}
	m
	&= r_1^2 + r_1 + 1 +r_1^2 r_2+ r_1 r_2^2+  r_2^2+ r_1^2 r_2^2+ r_1 r_2^2   -r_1 
		\\& \qquad {}   -r_1^2 + r_1^2r_2^2 +  r_1r_2^2 +  r_2 + r_1^2r_2 + r_1r_2 -r_1 - r_1^2 r_2
	\\&= 2r_1^2r_2^2 + 3r_1r_2^2 + r_2^2 + r_1^2r_2 + r_1r_2 + r_2 - r_1 + 1
	.\end{align*}
Note that:
	\begin{enumerate}
	 \setlength{\itemsep}{\smallskipamount}
	\item  \label{endpt-a=w-r1=1}
	If $r_1 = 1$ and $r_2 \neq 1$, then $m$ simplifies to $-3(r_2 + 2)$, because $r_2^2 + r_2 + 1 \equiv 0 \pmod{p}$ in this case.
	\item  \label{endpt-a=w-not1}
	If $r_1 \neq 1$ and $r_2 \neq 1$, then $m$ simplifies to 
		$-r_1r_2 - 2r_1 + r_2 + 2$,
	because $r_1^2 + r_1 + 1 \equiv r_2^2 + r_2 + 1 \equiv 0 \pmod{p}$ in this case.
	\end{enumerate}

\end{enumerate}

Now we provide a hamiltonian cycle for each of the generating sets listed in \cref{GensExp9}:

\begin{enumerate}
 \setlength{\itemsep}{\smallskipamount}

	\item[\pref{Exp9-no9-x+yw}]
	If $C_Q(P)$ has exponent~$3$, and $S = \{x, yw\}$, we let $a = x$ and $b = yw$ in \pref{HC3}. 
In this case, we have 
$w_1 = e$, $r_1 = r$, and $r_2 = 1$, 
so \pref{endpt-b=w-r2=1} tells us that the endpoint in~$G$ is
$w_2^{6r}$.
	
	\item[\pref{Exp9-no9-x+xyw}]
	If $C_Q(P)$ has exponent~$3$, and $S = \{x, xyw\}$, we let $a = x$ and $b = (xyw)^{-1}$ in \pref{HC9}. In this case, we have $w_1 = e$,
$r_1 = r$ and $r_2 = r^{-1} = r^2$, 
so \pref{endpt-b=w-not1} tells us that the endpoint in~$G$ is~$w_2^m$, where
	$$m = 3r_1(r_2+1) = 3r(r^2+1) = 3(r^3+r) \equiv  3(1 +r) = 3(r+1) \pmod{p}.$$
	
	\item[\pref{Exp9-9cent-xw+y}]
	If $C_Q(P)$ has exponent~$9$, and $S = \{xw, y\}$, we let $a = xw$ and $b = y$ in \pref{HC3}. In this case, we have 
$w_2 = e$, $r_1 = 1$ and $r_2 = r$, 
so \pref{endpt-a=w-r1=1} tells us that the endpoint in~$G$ is
$w_1^{-3(r+2)}$.
	
	\item[\pref{Exp9-9cent-xyw+y}]
	If $C_Q(P)$ has exponent~$9$, and $S = \{xyw, y\}$, we let $a = xyw$ and $b = y$ in \pref{HC3}. In this case, we have $w_2 = e$ and
$r_1 = r_2 = r$, 
so \pref{endpt-a=w-not1} tells us that the endpoint in~$G$ is
$w_2^m$, where
	$$ m = -r_1r_2 - 2r_1 + r_2 + 2 =-r^2 - 2r + r + 2 = -(r^2 + r + 1) + 3 \equiv 3 \pmod{p} .$$
	
	\item[\pref{Exp9-9cent-xy+xw}]
	If $C_Q(P)$ has exponent~$9$, and $S = \{xy,xw\}$, we let $a = xw$ and $b = (xy)^{-1}$ in \pref{HC9}. In this case, we have 
$w_2 = e$, $r_1 = 1$, and $r_2 = r^{-1} = r^2$, 
so \pref{endpt-a=w-r1=1} tells us that the endpoint in~$G$ is $w_1^m$, where
	$$m = -3(r_2 + 2) = -3(r^2 + 2) \equiv -3 \bigl( -( r + 1) + 2 \bigr) = 3(r-1) \pmod{p} .$$
	
	\item[\pref{Exp9-9cent-xy+x2yw}]
	If $C_Q(P)$ has exponent~$9$, and $S = \{xy, x^2yw\}$, we let $a = xy$ and $b = x^2yw$ in \pref{HC9}. In this case, we have $w_1 = e$ and
$r_1 = r_2 = r$, 
so \pref{endpt-b=w-not1} tells us that the endpoint in~$G$ is $w_2^m$, where
	$$m = 3r_1(r_2+1)  = 3r(r+1) = 3(r^2 + r) \equiv  3(-1) = -3  \pmod{p}.$$

	\end{enumerate}
In all cases, there is at most one nonzero value of~$r$ (modulo~$p$) for which the exponent of~$w_i$ is~$0$. Since we are free to choose~$r$ to be either of the two primitive cube roots of~$1$ in $\integer_p$, we may assume $r$~has been selected to make the exponent nonzero. Then \cref{FGL} provides a hamiltonian cycle in $\Cay(G;S)$.
\end{proof}

\section{Assume the Sylow $p$-subgroups of~$G$ are not normal}

\begin{lem} \label{PnotnormalIs13}
Assume
	\begin{itemize}
	\item $|G| = 27p$, where $p$ is an odd prime,
	and
	\item the Sylow $p$-subgroups of~$G$ are not normal. 
	\end{itemize}
Then $p = 13$, and $G = \integer_{13} \ltimes (\integer_3)^3$, where a generator~$w$ of\/~$\integer_{13}$ acts on\/ $(\integer_3)^3$ via multiplication on the right by the matrix
	$$ W = \begin{bmatrix}
	0 & 1 & 0 \\
	0 & 0 & 1 \\
	1 & 1 & 0 
	\end{bmatrix} .$$
Furthermore, we may assume 
	$$ \text{$S$ is of the form $\{w^i, w^j v\}$,} $$
where $v = (1,0,0) \in (\integer_3)^3$, and 
	$$(i,j) \in \{ (1,0), (2,0),  (1,2), (1,3), (1,5), (1,6), (2,5) \} .$$
\end{lem}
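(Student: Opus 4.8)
The plan is to first pin down the isomorphism type of $G$ by Sylow theory, and then to classify the minimal generating sets up to the symmetries that preserve the isomorphism class of $\Cay(G;S)$. For the structure, let $n_p$ be the number of Sylow $p$-subgroups. Then $n_p \mid 27$ and $n_p \equiv 1 \pmod{p}$, and $n_p \neq 1$ by hypothesis, so $n_p \in \{3,9,27\}$ and $p \mid n_p - 1 \in \{2,8,26\}$. Since $p$ is odd, the only possibility is $n_p = 27$ and $p = 13$. Counting elements, the $27$ Sylow $13$-subgroups meet trivially and account for $27 \cdot 12 = 324$ elements of order $13$, leaving exactly $27$ elements that must form the unique (hence normal, indeed characteristic) Sylow $3$-subgroup $Q$, of order~$27$. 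Thus $G = P \ltimes Q$ with $P = \langle w \rangle \iso \integer_{13}$ acting nontrivially on~$Q$, and the nontrivial action forces $13 \mid |\Aut(Q)|$. Checking the five groups of order~$27$, only $Q \iso (\integer_3)^3$ has $13 \mid |\Aut(Q)| = |\GL(3,3)| = 26 \cdot 24 \cdot 18$.

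Since $13 \nmid 3-1$ and $13 \nmid 3^2-1$ but $13 \mid 3^3-1$, an order-$13$ element of $\GL(3,3)$ has an irreducible characteristic polynomial (an irreducible cubic over $\GF(3)$), so it acts irreducibly on~$Q$. Taking a cyclic vector as $v = (1,0,0)$ puts this element in companion form, and I would check directly that the companion matrix of $t^3 - t - 1$ is exactly~$W$ and that it has order~$13$ (a root~$\lambda$ satisfies $\lambda \cdot \lambda^3 \cdot \lambda^9 = \lambda^{13} = 1$ with $\lambda \neq 1$). The freedom to rename the generator of~$P$ as a suitable power, together with change of basis, then lets me assume $w$ acts precisely by~$W$.

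For the generating sets, irreducibility gives $\langle w, v \rangle = G$, and $G$ is nonabelian, so every minimal generating set has exactly two elements, at least one projecting to a generator of $G/Q \iso \integer_{13}$. Because $I - W^i$ is invertible for $i \not\equiv 0 \pmod{13}$, conjugating by a suitable element of~$Q$ (an inner automorphism) clears the $Q$-part of any generator of the form $w^i q$; and the Singer torus $C_{\GL(3,3)}(W) \iso \GF(27)^\times$ acts simply transitively on $Q \smallsetminus \{0\}$, so an automorphism fixing~$w$ moves the remaining $Q$-part to~$v$. This reduces $S$ to the form $\{w^i, w^j v\}$ with $i \not\equiv 0$. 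I would then reduce $(i,j)$ modulo the symmetries generated by: (a) the Frobenius automorphism $w \mapsto w^3$, $q \mapsto q^3$, which fixes $v$ and sends $(i,j) \mapsto (3i,3j)$; (b) the inversions $(i,j) \mapsto (\pm i, \pm j)$, which preserve $\Cay(G;S)$; and (c) the swap $(i,j) \mapsto (j,i)$, valid when $j \not\equiv 0$, arising from clearing the other generator instead. The ratio $j/i$ (well defined since $i \not\equiv 0$) is invariant under~(a) and transforms by $x \mapsto -x$ and $x \mapsto x^{-1}$ under~(b) and~(c), which cuts the enumeration down to a short finite check yielding exactly the seven listed representatives together with the classes $j \equiv \pm i$.

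The hard part will be disposing of those two excluded classes $j \equiv \pm i$. In them, two elements $s, t$ of $S \cup S^{-1}$ are congruent modulo~$Q$ (either $w^i \equiv w^j v$ or $w^{-i} \equiv w^j v$), so $st^{-1}$ is a nontrivial element of~$Q$, of order~$3$. I would therefore apply \cref{MultiDouble} with the prime-order subgroup $H = \langle st^{-1} \rangle$: the edges labelled $s$ and~$t$ form a double edge joining $H$ to~$Hs$ in the coset multigraph $H \backslash \Cay(G;S)$, so it suffices to exhibit a hamiltonian cycle in that multigraph running through this double edge, after which these cases may be assumed away. Constructing that cycle, and verifying that the orbit enumeration of the pairs $(i,j)$ is genuinely exhaustive, is the most delicate part of the argument; everything else is bookkeeping with the explicit automorphisms described above.
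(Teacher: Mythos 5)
Your structural analysis (Sylow counting to force $p=13$ and $n_p=27$, the element count giving a normal Sylow $3$-subgroup, ruling out the other four groups of order~$27$ via $13\nmid|\Aut(Q)|$, the rational canonical form of $X^3-X-1$, and the reduction of $(i,j)$ by the Frobenius conjugacy $W\sim W^3\sim W^9$, inversions, and the swap) is sound and is essentially the paper's argument, with only cosmetic differences (the paper invokes Burnside's normal $p$-complement theorem and the action on $Q/\Phi(Q)$ where you count elements and automorphisms). The normalization of the $Q$-parts via an inner automorphism and the centralizer $\GF(27)^\times$ of~$W$ is also correct.

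The genuine gap is your treatment of the classes $j\equiv\pm i$. You defer them to ``the hard part,'' proposing to apply \cref{MultiDouble} with $H=\langle st^{-1}\rangle$ of order~$3$ and to exhibit a hamiltonian cycle through a double edge in the quotient multigraph $H\backslash\Cay(G;S)$ --- but that multigraph has $117$ vertices, you do not construct the cycle, and without it the lemma's ``we may assume'' is not established. Moreover this detour is unnecessary: if $j\equiv i$ then $S=\{w^i,\,w^iv'\}\subseteq w^iQ$, so $st^{-1}\in Q$ for all $s,t\in S$, and \cref{{pk}Subgrp} (with the normal $3$-subgroup $N=Q$) immediately yields a hamiltonian cycle; if $j\equiv -i$, replacing $w^jv$ by its inverse (which does not change the Cayley graph) reduces to the case $j\equiv i$. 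This is exactly how the paper excludes these classes, and note that your chosen normalization $0\le j\le 6$ with $i\in\{1,2\}$ already folds $j\equiv-i$ into $j=i$. Separately, your claim that the ratio $j/i$ ``cuts the enumeration down to a short finite check'' should be handled with care: the ratio modulo $x\mapsto\pm x^{\pm1}$ is an invariant but not a complete one (e.g.\ $(1,2)$ and $(1,6)$ lie in the same ratio class yet both appear in the list), so the finite check must track the pair $(i,j)$ itself under the group generated by $(i,j)\mapsto(3i,3j)$, $(\pm i,\pm j)$, and the swap --- which is precisely the paper's table of conjugate powers of~$W$ --- rather than the ratio alone.
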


\begin{proof}
Let $P$ be a Sylow $p$-subgroup of~$G$, and $Q$~be a Sylow $3$-subgroup of~$G$.
Since no odd prime divides $3-1$ or $3^2 - 1$, and $13$ is the only odd prime that divides $3^3 - 1$, Sylow's Theorem \cite[Thm.~15.7, p.~230]{Judson-AlgText} implies that $p = 13$, and that $N_G(P) = P$, so $G$ must have a normal $p$-complement \cite[Thm.~7.4.3]{Gorenstein-FinGrps}; i.e., $G = P \ltimes Q$. Since $P$ must act nontrivially on~$Q$ (since $P$ is not normal), we know that it must act nontrivially on $Q/\Phi(Q)$ \cite[Thm.~5.3.5, p.~180]{Gorenstein-FinGrps}.
However, $P$ cannot act nontrivially on an elementary abelian group of order $3$ or~$3^2$, because $|P| = 13$ is not a divisor of $3-1$ or $3^2 - 1$. 
Therefore, we must have $|Q/\Phi(Q)| = 3^3$, so $Q$ must be elementary abelian (and the action of~$P$ is irreducible). 

Let $W$ be the matrix representing the action of~$w$ on $ (\integer_3)^3$ (with respect to some basis that will be specified later).
In the polynomial ring $\integer_3[X]$, we have the factorization:
	\begin{align} \label{Factor(x13-1)}
	 \frac{X^{13} - 1}{X - 1} &=  (X^{3} - X - 1) \cdot (X^{3} + X^{2} - 1) 
	 \\ & \qquad \notag
	 \cdot (X^{3} +
X^{2} + X - 1) \cdot (X^{3} - X^{2} - X - 1) 
	. \end{align}
Since $w^{13} = e$, the minimal polynomial of~$W$ must be one of the factors on the right-hand side. By replacing $w$ with an appropriate power, we may assume it is the first factor. Then, choosing any nonzero $v \in (\integer_3)^3$, the matrix representation of~$w$ with respect to the basis $\{v, v^w, v^{w^2} \}$ is~$W$ (the Rational Canonical Form).

Now, let $\zeta$ be a primitive $13$th root of unity in the finite field $\GF(27)$. Then any Galois automorphism of $\GF(27)$ over $\GF(3)$ must raise $\zeta$ to a power. Since the subgroup of order~$3$ in $\integer_{13}^\times$ is generated by the number~$3$, we conclude that the orbit of~$\zeta$ under the Galois group is $\{\zeta, \zeta^3, \zeta^9\}$. These must be the $3$ roots of one of the irreducible factors on the right-hand side of \pref{Factor(x13-1)}. Thus, for any $k \in \integer_{13}^\times$, the matrices $W^k$, $W^{3k}$, and~$W^{9k}$ all have the same minimal polynomial, so they are conjugate under $\GL_3(3)$. That is: 
\begin{align} \label{MinPolysOfZ13xQ}
\begin{matrix}
\text{powers of $W$ in the same row of the}
\\ \text{following table are conjugate under $\GL_3(3)$:}
\end{matrix}
\qquad
\begin{array}{|c|}
\noalign{\hrule}
W, W^3, W^9  \\
\noalign{\hrule}
W^2, W^5, W^6  \\
\noalign{\hrule}
W^4, W^{12}, W^{10} \\
\noalign{\hrule}
W^7, W^8, W^{11} \\
\noalign{\hrule}
\end{array}
\end{align}

There is an element~$a$ of~$S$ that generates $G/Q \iso P$. Then $a$ has order~$p$, so, replacing it by a conjugate, we may assume $a \in P = \langle w \rangle$, so $a = w^i$ for some $i \in \integer_{13}^\times$. From \pref{MinPolysOfZ13xQ}, we see that we may assume $i \in \{1,2\}$ (perhaps after replacing $a$ by its inverse).

Now let $b$ be the second element of~$S$, so we may assume $b = w^j v$ for some~$j$. We may assume $0 \le j \le 6$ (by replacing $b$~with its inverse, if necessary). We may also assume $j \neq i$, for otherwise $S \subset a Q$, so \cref{{pk}Subgrp} applies.

If $j = 0$, then $(i,j)$ is either $(1,0)$ or $(2,0)$, both of which appear in the list; henceforth, let us assume $j \neq 0$.

\setcounter{case}{0}

\begin{case}
Assume $i = 1$.
\end{case}
Since $j \neq i$, we must have $j \in \{2,3,4,5,6\}$. 

Note that, since $W^3$ is conjugate to~$W$ under $\GL_3(3)$ (since they are in the same row of \pref{MinPolysOfZ13xQ}), we know that the pair $(w, w^4)$ is isomorphic to the pair $\bigl( w^3, (w^3)^4 \bigr) = (w^3, w^{-1})$. By replacing $b$ with its inverse, and then interchanging $a$ and~$b$, this is transformed to $(w,w^3)$. So we may assume $j \neq 4$.

\begin{case}
Assume $i = 2$.
\end{case}
We may assume $W^j$ is in the second or fourth row of the table (for otherwise we could interchange $a$ with~$b$ to enter the previous case. So $j \in \{2,5,6\}$. Since $j \neq i$, this implies $j \in \{5,6\}$. However, since $W^5$ is conjugate to~$W^2$ (since they are in the same row of \pref{MinPolysOfZ13xQ}), and we have $(w^2)^3 = w^6$ and $(w^5)^3 = w^2$, we see that the pair $(w^2, w^6)$ is isomorphic to $(w^2, w^5)$. So we may assume $j \neq 6$.
\end{proof}

\begin{prop}
If\/ $|G| = 27p$, where $p$ is prime, and the Sylow $p$-subgroups of~$G$ are not normal, then $\Cay(G;S)$ has  a hamiltonian cycle.
\end{prop}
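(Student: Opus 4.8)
The plan is to descend from the order-$351$ graph $\Cay(G;S)$ to a multigraph on only $27$ vertices and then to produce hamiltonian cycles there directly. By \cref{PnotnormalIs13} we are reduced to the single group $G = \integer_{13}\ltimes(\integer_3)^3$ with the stated action, and to the seven explicit generating sets $S=\{w^i,w^jv\}$. The Factor-Group-Lemma strategy of the previous sections is unavailable here: because $w$ acts irreducibly and nontrivially on $Q:=(\integer_3)^3$, the only proper nontrivial normal subgroup of~$G$ is $Q=G'$, and this is not cyclic. Hence neither \cref{KeatingWitte} nor \cref{FGL} can be applied with any choice of normal subgroup. Instead I would invoke \cref{MultiDouble} with the (non-normal) Sylow subgroup $H=P=\langle w\rangle$, whose order $13$ is prime; note that \cref{MultiDouble} does not require $H$ to be normal.

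Taking $Q$ as a transversal for the right cosets of~$P$, the quotient multigraph $P\backslash\Cay(G;S)$ has vertex set $Q=(\integer_3)^3$, and the relation $q\,w^k=w^k(qW^k)$ shows that the two generators $a=w^i$ and $b=w^jv$ induce the permutations
\[ a\colon q\mapsto qW^i, \qquad b\colon q\mapsto qW^j+v \]
of~$Q$. Each of these is a permutation of order~$13$ with a single fixed vertex (namely $0$ for~$a$, and the unique solution of $q(W^j-I)=-v$ for~$b$), splitting the other $26$ vertices into two orbits of length~$13$; thus the $a$-edges and the $b$-edges each consist of a loop together with two disjoint $13$-cycles. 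A double edge is moreover always present: solving $qW^i=qW^j+v$ gives the vertex $q^\ast=v(W^i-W^j)^{-1}$, which is legitimate since $W^i-W^j=W^j(W^{i-j}-I)$ is invertible (as $i\not\equiv j\pmod{13}$) and is nonzero since $v\neq0$; at~$q^\ast$ the edges labelled $a$ and~$b$ run in parallel to the distinct vertex $q^\ast W^i$.

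It then suffices, for each of the seven generating sets, to exhibit a hamiltonian cycle of this $27$-vertex multigraph that traverses a double edge, for \cref{MultiDouble} promotes any such cycle to a hamiltonian cycle of $\Cay(G;S)$. I would build these from the circle structure above: begin with one of the two $a$-cycles and splice in the other $a$-cycle together with the two exceptional vertices (the fixed points of~$a$ and of~$b$, which must be entered through $b$- and $a$-edges respectively) by rerouting along suitable $b$-edges, arranging that the splice at~$q^\ast$ uses the double edge. The main obstacle is precisely this case analysis: for each of the seven pairs $(i,j)$ one must check that the $b$-edges interconnect the two $a$-cycles and the two exceptional vertices richly enough to be merged into a single $27$-cycle, and that the cycle produced genuinely runs along a parallel pair of edges. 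This is a finite and explicit, if tedious, verification, and the conjugacy symmetries among the powers of~$W$ used in the proof of \cref{PnotnormalIs13} should allow several of the cases to be handled at once.
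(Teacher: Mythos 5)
Your approach is the same as the paper's: reduce via \cref{PnotnormalIs13} to the seven generating sets of $G=\integer_{13}\ltimes(\integer_3)^3$, pass to the $27$-vertex quotient multigraph $P\backslash\Cay(G;S)$, find a hamiltonian cycle there that traverses a double edge, and apply \cref{MultiDouble} (which, as you note, does not require $H$ to be normal). Your structural setup is essentially right, and the observation that the vertex $q^\ast=v(W^i-W^j)^{-1}$ always carries an $\{a,b\}$ double edge (legitimate because $W^{i-j}-I$ is invertible for $i\not\equiv j$) is a correct and useful point that the paper leaves implicit. One small error: when $j=0$ the map $q\mapsto q+v$ has order $3$, not $13$, so in the cases $(i,j)=(1,0)$ and $(2,0)$ the $b$-edges form nine disjoint triangles rather than a loop plus two $13$-cycles, and your splicing template does not apply as stated there.

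The genuine gap is that the proposal stops exactly where the paper's proof begins. The entire substance of the paper's argument is the explicit exhibition, for each of the seven pairs $(i,j)$, of a hamiltonian cycle in $P\backslash\Cay(G;S)$ together with an identified double edge that it uses. You correctly describe this as a finite, tedious verification, but you do not carry it out, and the heuristic you offer (start from one $a$-cycle and splice in the other $a$-cycle and the two exceptional vertices along $b$-edges) is not guaranteed to succeed without checking; the cycles the paper actually records do not all have that shape (for instance, the $(1,0)$ cycle has six maximal runs of $a^{\pm1}$, not the two that a single splice would produce). Nothing in your written argument certifies that a suitable cycle exists in each of the seven cases, so the key existence claim is asserted rather than established, and the proof is incomplete until the explicit cycles (or a general construction that provably works in all seven cases) are supplied.
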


\begin{proof}
From \cref{PnotnormalIs13} (and \cref{>3}), we may assume $G = \integer_{13} \ltimes (\integer_3)^3$. For each of the generating sets listed in \cref{PnotnormalIs13}, we provide an explicit hamiltonian cycle in the quotient multigraph $P \backslash {\Cay(G;S)}$ that uses at least one double edge. So \cref{MultiDouble} applies.

To save space, we use $i_1i_2i_3$ to denote the vertex $P (i_1, i_2,i_3)$.

\begingroup \renewcommand{\qtit}{} 
$$\begin{array}{ccccccccccccccc}
 \noalign{$(i,j) = ( 1 , 0 )$ \quad $a = w , \quad a^{-1} = w^{12}, \quad b = (1,0,0), \quad b^{-1} = (-1,0,0)$\smallskip}
\noalign{Double edge: $ 222 \rightarrow 022 $ with $ a^{-1} $ and $ b $\medskip} 
&& \qtit{000}
\bybi {200} \bya {020} \bya {002} \bya {220} \bybi {120} \bya {012}
\\ 
\bya {221} \bya {102} \byb {202} \bya {210} \bya {021} \bya {112} \bya {201}
\\
\bybi {101} \byai {211} \byai {212} \byai {222} \byb {022} \byb {122} \byai {121} 
\\
\byai {111} \bybi {011} \byai {110} \byai {001} \byai {010} \byai {100} \bybi {000} 
 \end{array}$$

$$\begin{array}{ccccccccccccccccc}
\noalign{$(i,j) = ( 2 , 0 )$ \quad $a = w^2 , \quad a^{-1} = w^{11}, \quad b = (1,0,0), \quad b^{-1} = (-1,0,0)$\smallskip}
\noalign{Double edge: $ 020 \rightarrow 220 $ with $ a $ and $ b^{-1} $\medskip} 
&&{000}
\bybi {200} \bya {002} \bya {022} \bya {212} \bybi {112} \byai {210}
\\
\byai {122} \byai {111} \byai {110} \bybi {010} \byai {201} \bybi {101} \bya {012} 
\\
\bya {102} \bya {020} \bybi {220} \bya {222} \bya {211} \bya {120} \bya {221} 
\\
\byb {021} \byai {202} \byai {121} \byai {011} \byai {001} \byai {100} \bybi {000}  
  \end{array}$$

$$\begin{array}{ccccccccccccccccc}
\noalign{$(i,j) = ( 1 , 2 )$ \quad $a = w , \quad a^{-1} = w^{12}, \quad b = w^2(1,0,0), \quad b^{-1} = w^{11}(-1,-1,1)$\smallskip}
\noalign{Double edge: $ 220 \rightarrow 022 $ with $ a $ and $ b $\medskip} 
&& {000}
\bybi {221} \byai {012} \byai {120} \bybi {102} \bybi {200} \bya {020} 
\\
\bya {002} \bya {220} \byb {022} \bya {222} \byb {011} \bya {111} \bya {121} 
\\
\bya {122} \bya {202} \bya {210} \bya {021} \bya {112} \bybi {101} \byai {211} 
\\\byai {212} \byb {201} \byb {110} \byai {001} \byai {010} \byai {100} \bybi {000}  
 \end{array}$$

$$\begin{array}{ccccccccccccccccc}
\noalign{$(i,j) = ( 1 , 3 )$ \quad $a = w , \quad a^{-1} = w^{12}, \quad b = w^3 (1,0,0), \quad b^{-1} = w^{10}(0,1,-1)$\smallskip}
\noalign{Double edge: $ 200 \rightarrow 020 $ with $ a $ and $ b $\medskip} 
&& {000}
\bybi {012} \byai {120} \bybi {221} \bya {102} \bya {200} \byb {020} 
\\
\bya {002} \bya {220} \bya {022} \bya {222} \bya {212} \bya {211} \bya {101} 
\\
\bybi {201} \byai {112} \byai {021} \byai {210} \byai {202} \byai {122} \byb {121}
\\
\byai {111} \byai {011} \byai {110} \byai {001} \byai {010} \byai {100} \bybi {000}
  \end{array}$$

$$\begin{array}{ccccccccccccccccc}
\noalign{$(i,j) = ( 1 , 5 )$ \quad $a = w , \quad a^{-1} =  w^{12}, \quad b = w^5 (1,0,0), \quad b^{-1} = w^8 (1,0,1)$\smallskip}
\noalign{Double edge: $ 220 \rightarrow 022 $ with $ a $ and $ b^{-1} $\medskip} 
&&{000}
\bybi {101} \bya {120} \bya {012} \bya {221} \bybi {010} \bya {001} 
\\
\bya {110} \bya {011} \bya {111} \byb {121} \bya {122} \bybi {102} \bya {200} 
\\
\bya {020} \bya {002} \bya {220} \bybi {022} \bya {222} \bya {212} \bya {211} 
\\
\byb {202} \bya {210} \bya {021} \bya {112} \bya {201} \bya {100} \bybi {000}  
 \end{array}$$

$$\begin{array}{ccccccccccccccccc}
\noalign{$(i,j) = ( 1 , 6 )$ \quad $a = w , \quad a^{-1} =  w^{12}, \quad b = w^6 (1,0,0), \quad b^{-1} = w^7 (-1,1,1)$\smallskip}
\noalign{Double edge: $ 021 \rightarrow 210 $ with $ a^{-1} $ and $ b $\medskip} 
&& {000}
\bybi {211} \bybi {201} \byai {112} \byai {021} \byb {210} \byb {101} 
\\
\byb {120} \bya {012} \bya {221} \bya {102} \bya {200} \bya {020} \bya {002} 
\\
\bya {220} \bya {022} \bya {222} \bya {212} \byb {202} \byai {122} \byai {121} 
\\
\byai {111} \byai {011} \byai {110} \byai {001} \byai {010} \byai {100} \bybi {000}  
 \end{array}$$

\begin{align*}
\raise3\baselineskip \hbox{$     
\begin{array}{ccccccccccccccccc}
\noalign{$(i,j) = ( 2 , 5 )$ \quad $a = w^2 , \quad a^{-1} =  w^{11}, \quad b = w^5 (1,0,0), \quad b^{-1} = w^8 (1,0,1)$\smallskip}
\noalign{Double edge: $ 112 \rightarrow 210 $ with $ a^{-1} $ and $ b $\medskip} 
&&{000}
\bybi {101} \bya {012} \byb {102} \bya {020} \bya {220} \bya {222} 
\\
\byb {112} \byb {210} \byai {122} \byai {111} \byai {110} \byai {010} \byai {201} 
\\
\byai {021} \byai {202} \bybi {211} \bya {120} \bya {221} \bya {200} \bya {002} 
\\
\bya {022} \bya {212} \bybi {121} \byai {011} \byai {001} \byai {100} \bybi {000}
 \end{array}
$}
\qedhere
\end{align*}
\endgroup
\end{proof}

\begin{ack}
This work was partially supported by research grants from the Natural Sciences and Engineering Research Council of Canada.
\end{ack}

\end{document}